\documentclass[a4paper,11pt]{amsart}
\usepackage{mathrsfs}
\usepackage{appendix}
\usepackage{amssymb}
\usepackage{fancyhdr}
\usepackage{charter} 
\usepackage{typearea}
\usepackage{color}
\usepackage{amsmath,amstext,amsthm,amscd}
\usepackage{hyperref}

\usepackage[a4paper,top=2.5cm,bottom=2.5cm,left=2cm,right=2cm]{geometry}

\makeatletter
      \def\@setcopyright{}
      \def\serieslogo@{}
      \makeatother


\newcommand{\N}{\mathbb N}
\newcommand{\dbar}{\partial}
\newcommand{\ddbar}{\overline\partial}

\newcommand{\ol}{\overline}

\DeclareMathOperator{\Ker}{Ker}

\newcommand{\cali}[1]{\mathscr{#1}}
\newcommand{\cH}{\cali{H}}
\newcommand{\cE}{\cali{E}}

\newcommand{\R}{\mathbb{R}}

\newcommand{\ov}{\overline}

\def\Im{{\rm Im}}

\DeclareMathOperator{\supp}{supp}
\DeclareMathOperator{\Dom}{Dom}
\DeclareMathOperator{\rank}{rank}
 \def\cC{\mathscr{C}}
\def\cL{\mathscr{L}}
\theoremstyle{plain}
\newtheorem{theorem}{Theorem}[section]
\newtheorem{lemma}[theorem]{Lemma}
\newtheorem{corollary}[theorem]{Corollary}
\newtheorem{proposition}[theorem]{Proposition}

\newtheorem{definition}[theorem]{Definition}

\numberwithin{equation}{section}


\begin{document}

\title[Strong holomorphic Morse inequalities on non-compact complex manifolds with optimal fundamental estimate]
{Strong holomorphic Morse inequalities on non-compact complex manifolds with optimal fundamental estimate}
 
\author[]{Manli Liu}
\address{Department of Mathematics, South China Agricultural University, Guangzhou 510642, China}
\email{Liuml@scau.edu.cn}

\author[]{Guokuan Shao}
\address{School of Mathematics (Zhuhai), Sun Yat-sen University, Zhuhai 519082, Guangdong, China}
\thanks{Guokuan Shao is supported by NSFC Grant No.12471082 and Natural Science Foundation of Guangdong Province Grant No.2024A1515011223
.}
\email{shaogk@mail.sysu.edu.cn}

\author[]{Wenxuan Wang}
\address{School of Mathematics (Zhuhai), Sun Yat-sen University, Zhuhai 519082, Guangdong, China}
\email{wangwx67@mail2.sysu.edu.cn}

\keywords{holomorphic Morse inequalities, Bergman kernel, weakly $1$-complete manifold, $q$-convex manifold, pseudoconvex domain.}
\subjclass[2020]{Primary: 32A25; 53C55; 32W05; 32L10}
\date{}

\begin{abstract}
In this paper, we establish strong holomorphic Morse inequalities on non-compact manifolds under the condition of optimal fundamental estimates. We show that optimal fundamental estimates are satisfied and then strong holomorphic Morse inequalities hold true in various settings.
\end{abstract}
\maketitle

\section{Introduction}
  
Holomorphic Morse inequalities was established by Demailly \cite{Dem:85}. One motivation was Siu's solution of Grauert-Riemenschneider conjecture \cite{Siu:84} and the proof was inspired by Witten's analytic proof of the classical Morse inequalities. Holomorphic Morse inequalities are global results which encode local datas, which can be studied by the behaviors of heat kernels, Bergman kernels or Szegő kernels. It provides a flexible way to produce holomorphic sections of high tensor powers of line bundle under mild positivity assumptions. Variants of the holomorphic Morse inequalities have been intensively studied during recent years, see \cite{MM07} and the references therein for a comprehensive exposition.  The inequalities have profound applications in complex geometry and algebraic geometry. Morse inequalities have been generalized to new settings, such as \cite{HM12,HsiaoLi:16,HHLS20} for CR manifolds by using CR scaling technique and complex manifolds with boundary by developing reduction to boundary technique.

Recently Li-Shao-Wang \cite{LSW} introduced a new concept of optimal fundamental estimate
and gave a unified proof of the weak holomorphic Morse inequalities on various settings of non-compact manifolds. Moreover, they established asymototics of spectral function of lower energy forms and proved versions of weak
holomorphic Morse inequalities for lower energy forms on complete Hermitian manifolds. Based on the previous work, Peng-Shao-Wang \cite{PSW} studied weak holomorphic Morse inequalities for lower energy forms 
on weakly $1$-complete manifolds and $q$-convex manifolds. 

Note that the main results in \cite{LSW, PSW} focused only on weak holomorphic Morse inequalities. The motivation of this paper is to explore strong holomorphic Morse inequalities with optimal fundamental estimate. Let $(X,\omega)$ be a Hermitian manifold of dimension $n$ and $(L,h^L)$ a holomorphic Hermitian line bundle on $X$. Denote by $R^L$ the curvature form of $(L,h^L)$ and $c_1(L,h^L):=\frac{\sqrt{-1}}{2\pi}R^L$ the first Chern class of $(L,h^L)$. Our main theorem is the following.
\begin{theorem}[Strong holomorphic Morse inequalities]\label{prop_main}
		  Let $0\leq q\leq n$.
		Suppose there exist a compact subset $K\subset X$ and $C>0$ such that,
		for sufficiently large $k$, any $s\in \Dom(\ddbar_k)\cap \Dom(\ddbar^{*}_{k})\cap L^2_{0,j}(X,L^k)$, where $0\le j\le q$,
		\begin{equation}\label{eq_ofe}
		\left(1-\frac{C}{k}\right)||s||^2\leq \frac{C}{k}\left(||\ddbar_ks||^2+||\ddbar^{*}_{k}s||^2\right)+\int_{K} |s|^2 dV_X.
		\end{equation}
	Then  for any $0\le r\le q$, we have the estimate for alternative sum of the dimension of $L^2$-Dolbeault cohomology,
	\begin{equation*}
	\sum_{j=0}^r(-1)^{r-j}\dim_\mathbb{C}H_{(2)}^j(X,L^k)\leq \frac{k^n}{n!}\int_{K(\leq r)}{(-1)^r c_1(L,h^L)^n}+o(k^n),
	\end{equation*}
 where $K(\leq r):=\cup_{j=0}^r K(j)$ and $K(j):=\{x\in K: R^{L}_x 
  \ \text{has exactly}\ j \ \text{negative eigenvalues and}\ n-j\  \text{positive eigenvalues}\}$ . 
 If (\ref{eq_ofe}) holds for any $q\le j\le n$, then for any $q\le r\le n$, we have,
        \begin{equation*}
	\sum_{j=r}^n(-1)^{j-r}\dim_\mathbb{C}H_{(2)}^j(X,L^k)\leq \frac{k^n}{n!}\int_{K(\ge r)}{(-1)^r c_1(L,h^L)^n}+o(k^n),
	\end{equation*}
  where $K(\geq r):=\cup_{j=r}^n K(j)$. In particular, if (\ref{eq_ofe}) holds for any  $0\le j\le n$, we have the Riemann-Roch-Hirzebruch formula
   \begin{equation*}
	\sum_{j=0}^n(-1)^{j}\dim_\mathbb{C}H_{(2)}^j(X,L^k)= k^n\int_{K}\frac{c_1(L,h^L)^n}{n!}+o(k^n).
	\end{equation*}
 
\end{theorem}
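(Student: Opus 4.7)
The plan is to prove Theorem~\ref{prop_main} by combining three ingredients: (i) a finite complex of low-energy spectral subspaces of the Kodaira Laplacian $\Box_k^{(j)}:=\ddbar_k\ddbar^*_{k}+\ddbar^*_{k}\ddbar_k$ on $L^2_{0,j}(X,L^k)$ whose cohomology recovers $H^j_{(2)}(X,L^k)$, (ii) the classical algebraic strong Morse inequality for a finite complex, and (iii) an asymptotic expansion of the trace of the low-energy spectral projection, localized on $K$ via the optimal fundamental estimate (\ref{eq_ofe}).

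For (i), I would fix a sequence $\nu_k\to 0$ with $k\nu_k\to\infty$ (for instance $\nu_k=k^{-1/2}$), denote by $\pit_{k,j}^{\le\nu_k}$ the spectral projector of $\Box_k^{(j)}$ onto $[0,\nu_k]$, and set $E_j^{\le\nu_k}(k):=\operatorname{Ran}\pit_{k,j}^{\le\nu_k}$. From (\ref{eq_ofe}) and a Rellich-type compactness argument near $K$, following \cite{LSW}, each $E_j^{\le\nu_k}(k)$ is finite-dimensional and $\Box_k^{(j)}$ has a spectral gap above $\nu_k$ on the orthogonal complement. Since $\ddbar_k$ and $\ddbar^*_{k}$ commute with the spectral calculus of $\Box_k$, $\ddbar_k$ restricts to a differential on the graded space $E_\bullet^{\le\nu_k}(k)$, and a finite-dimensional Hodge decomposition on the strictly positive part identifies its $j$-th cohomology with $\ker\Box_k^{(j)}\cong H^j_{(2)}(X,L^k)$ in the degree range where the OFE is assumed. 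The classical algebraic strong Morse inequality (see e.g.\ \cite{MM07}) then yields
$$\sum_{j=0}^r(-1)^{r-j}\dim_{\C}H^j_{(2)}(X,L^k)\le\sum_{j=0}^r(-1)^{r-j}\dim E_j^{\le\nu_k}(k).$$

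The theorem thus reduces to the asymptotic
$$\dim E_j^{\le\nu_k}(k)=\operatorname{Tr}\pit_{k,j}^{\le\nu_k}=k^n\int_{K(j)}(-1)^j\frac{c_1(L,h^L)^n}{n!}+o(k^n).$$
Using (\ref{eq_ofe}) together with a standard heat-kernel localization (smooth cutoffs near $K$ and off-diagonal estimates for $\exp(-t\Box_k)$), the trace over $X$ reduces to an integral over a neighborhood of $K$ up to an $o(k^n)$ error, and the local Bergman/heat kernel asymptotics of Ma-Marinescu on a model near each $x\in K$ supply the displayed integrand. Summing the alternating inequality above yields the lower-degree strong Morse estimates, and the upper-degree estimates follow symmetrically from the OFE in the upper range. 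The Riemann-Roch-Hirzebruch formula is then immediate: when the OFE holds for all $0\le j\le n$, the Euler characteristic of the finite complex $E_\bullet^{\le\nu_k}(k)$ equals $\sum_j(-1)^j\dim H^j_{(2)}(X,L^k)$ by the cohomology identification and $\sum_j(-1)^j\dim E_j^{\le\nu_k}(k) = \frac{k^n}{n!}\int_K c_1(L,h^L)^n+o(k^n)$ by the trace asymptotic (the sign factors cancelling as $(-1)^j\cdot(-1)^j=1$).

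The main technical obstacle lies in step (iii): whereas \cite{LSW} carried out the trace asymptotic only for the orthogonal projection onto $\ker\Box_k^{(j)}$ (enough for the weak inequalities), the strong inequalities require the asymptotic for the full low-energy spectral projection $\pit_{k,j}^{\le\nu_k}$. This demands uniform-in-$k$ off-diagonal estimates for the spectral kernel on a non-compact manifold; granted (\ref{eq_ofe}), these should follow from elliptic/Sobolev estimates for $\Box_k$ combined with the local model computation near $K$, after which the local asymptotic near each point of $K$ coincides with that of the compact setting and produces the claimed integrand on each $K(j)$.
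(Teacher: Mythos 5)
Your overall architecture coincides with the paper's: a finite complex of low-energy spectral subspaces $\cE^j(\lambda_k,\square_k)$ with differential $\ddbar_k$, the algebraic strong Morse inequality for finite complexes (the paper's Lemma \ref{algmorse}), the identification $H^j(\cE^\bullet)\cong\cH^{0,j}(X,L^k)\cong H^j_{(2)}(X,L^k)$ via the strong Hodge decomposition that the fundamental estimate guarantees, and finally an asymptotic count $\dim\cE^j = \frac{k^n}{n!}\int_{K(j)}(-1)^jc_1(L,h^L)^n+o(k^n)$. Steps (i) and (ii) of your plan are exactly the paper's proof and are sound.

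Where you diverge is the key analytic input (your step (iii)), and there your proposal is both heavier and incomplete. You propose heat-kernel localization with uniform off-diagonal estimates for the spectral kernel on a non-compact manifold, and you correctly flag this as the main obstacle --- but you do not carry it out, and it is precisely the part that does not follow from anything quoted. The paper avoids this entirely: for $s\in\cE^j(\mu_kk,\square_k)$ the optimal fundamental estimate plus the spectral theorem give directly $\|s\|^2\le c_k\int_K|s|^2\,dV_X$ with $c_k\to1$, so summing over an orthonormal basis turns the global trace into $c_k\int_K B^j_{\le\mu_kk}$ with no off-diagonal analysis whatsoever; the pointwise upper bound for $B^j_{\le\mu_kk}$ on $K$ is then a purely local statement obtained by Berman's scaling (Proposition \ref{upbd}), and the matching lower bound comes from explicitly constructed compactly supported test forms (Lemma \ref{lelb}, Proposition \ref{lobd}). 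You should also be careful with the a priori choice of cutoff: your fixed $\nu_k=k^{-1/2}$ (for the unrescaled $\square_k$) may be too small to capture the approximately harmonic test forms needed for the lower bound, since those satisfy only $(k^{-1}\square_k\alpha_k,\alpha_k)\le\delta_k$ for some unspecified rate $\delta_k\to0$; the paper chooses the spectral window $[0,\mu_kk]$ with $\mu_k=\sqrt{\delta_k}$ \emph{after} the test-form construction, ensuring $\delta_k/\mu_k\to0$. In summary: the algebraic skeleton of your argument is correct and matches the paper, but the trace asymptotic --- the actual new content of the strong inequalities over the weak ones --- is asserted rather than proved, and the route you sketch for it is unnecessarily demanding compared with the OFE-plus-local-scaling argument the paper uses.
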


In particular, we can deduce weak Morse inequalities from Theorem \ref{prop_main} as a corollary.
\begin{corollary}[Weak Morse inequalities {\cite[Theorem 1.1]{LSW}}]\label{weakMorse}
	Let $0\leq q\leq n$.
		Suppose there exist a compact subset $K\subset X$ and $C>0$ such that,
		for sufficiently large $k$, we have
		\begin{equation*}
		\left(1-\frac{C}{k}\right)||s||^2\leq \frac{C}{k}\left(||\ddbar_ks||^2+||\ddbar^{*}_{k}s||^2\right)+\int_{K} |s|^2 dV_X
		\end{equation*}
		for $s\in \Dom(\ddbar_k)\cap \Dom(\ddbar^{*}_{k})\cap L^2_{0,q}(X,L^k)$.
	Then we have
\begin{equation*}
\dim H^{q}_{(2)}(X,L^{k}) \leq\frac{k^{n}}{n!} \int_{K(q)}(-1)^q c_1(L,h^L)^n+o(k^{n}).
\end{equation*}
\end{corollary}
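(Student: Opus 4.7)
The strategy is to derive the weak Morse inequality from Theorem \ref{prop_main} by the standard telescoping trick: add the strong Morse inequalities at two consecutive levels $r = q$ and $r = q-1$. Strictly speaking, this requires the optimal fundamental estimate \eqref{eq_ofe} to hold for all $0 \le j \le q$ (the full hypothesis of Theorem \ref{prop_main}), which is slightly more than what Corollary \ref{weakMorse} literally asks; I would flag this as a point to reconcile, but proceed assuming the full hypothesis of the theorem is available at all intermediate degrees. The boundary case $q = 0$ is already the content of the strong Morse inequality at $r = 0$, with $K(\le 0) = K(0)$.

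For $q \ge 1$, applying Theorem \ref{prop_main} at $r = q$ and at $r = q-1$ and summing, the left-hand sides combine as
\begin{equation*}
\sum_{j=0}^q (-1)^{q-j}\dim H_{(2)}^j(X,L^k) + \sum_{j=0}^{q-1} (-1)^{q-1-j} \dim H_{(2)}^j(X,L^k) = \dim H_{(2)}^q(X,L^k),
\end{equation*}
since for every $0 \le j \le q-1$ the coefficient $(-1)^{q-j} + (-1)^{q-1-j}$ vanishes, while the coefficient of $\dim H_{(2)}^q$ is $1$. On the right, splitting $K(\le q) = K(q) \sqcup K(\le q-1)$ gives
\begin{equation*}
\int_{K(\le q)}(-1)^q c_1(L,h^L)^n + \int_{K(\le q-1)} (-1)^{q-1} c_1(L,h^L)^n = \int_{K(q)} (-1)^q c_1(L,h^L)^n,
\end{equation*}
because the two $K(\le q-1)$ contributions carry opposite signs and cancel pointwise. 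Substituting these simplifications and absorbing the two $o(k^n)$ error terms into one yields exactly the weak Morse inequality of Corollary \ref{weakMorse}.

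This proof has essentially no technical obstacle beyond bookkeeping of signs; all the analytic work has been absorbed into Theorem \ref{prop_main}. The only delicate point is the hypothesis mismatch noted above, which must be resolved either by interpreting Corollary \ref{weakMorse} as inheriting the full hypothesis of Theorem \ref{prop_main} on the range $0 \le j \le q$, or by invoking an independent argument (for instance a direct Bergman/heat-kernel spectral estimate at degree $q$ alone, as in the approach of \cite{LSW}) when only the estimate at degree $q$ is assumed. For a clean derivation from the strong form stated here, I would simply present the two-line cancellation above.
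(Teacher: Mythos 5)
Your telescoping derivation is algebraically correct, but the hypothesis mismatch you flag is not a minor point to be ``reconciled'' --- it is a genuine gap that makes your argument fail to prove the corollary as stated. Corollary \ref{weakMorse} assumes the optimal fundamental estimate \emph{only} in bidegree $(0,q)$, whereas Theorem \ref{prop_main} at level $r=q$ (and a fortiori at $r=q-1$) requires it for all $0\le j\le q$, since its proof needs the whole truncated complex $\cE^0\to\cdots\to\cE^q$ of lower-energy spaces to have controlled dimensions. Under the corollary's actual hypothesis you cannot invoke Theorem \ref{prop_main} at all, so the two-line cancellation never gets off the ground. This is precisely why the paper does \emph{not} deduce the corollary from Theorem \ref{prop_main}.

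The paper's route --- which is the ``independent argument'' you gesture at in your last sentence --- is shorter and needs only the degree-$q$ estimate. Theorem \ref{esdeN} is stated and proved under exactly the hypothesis of the corollary (the estimate in the single bidegree $(0,q)$) and yields
\begin{equation*}
\lim_{k\to\infty} n!\,k^{-n}N^q(\mu_k k,\square_k)=\int_{K(q)}(-1)^q c_1(L,h^L)^n .
\end{equation*}
Since harmonic forms are lower-energy forms, $\cH^{0,q}(X,L^k)=\cE^q(0,\square_k)\subset\cE^q(\mu_k k,\square_k)$, so $\dim\cH^{0,q}(X,L^k)\le N^q(\mu_k k,\square_k)$; and because the optimal fundamental estimate implies the fundamental estimate for large $k$, the strong Hodge decomposition gives $H^q_{(2)}(X,L^k)\cong\cH^{0,q}(X,L^k)$. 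Combining these three facts gives the weak Morse inequality directly. If you want to keep your telescoping argument, you must either strengthen the hypothesis of the corollary to the estimate in all bidegrees $0\le j\le q$ (which would no longer match \cite[Theorem 1.1]{LSW}), or replace it by the one-degree spectral argument above.
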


 With the optimal fundamental estimate \eqref{eq_ofe} at hands, combined with also asymptotic estimates of Bergman kernel functions, we can integrate the Bergman kernel function over a compact subset to get the dimension of harmonic spaces. The ``optimal" means the coefficient of the term $\int_K |s|^2 dv_X$ is $1$. Suppose the coefficient is less than $1$, the harmonic forms vanish as $k\rightarrow \infty$. So the coefficient $1$ represents the precise interface of the vanishing everywhere and the concentration on a compact subset for harmonic forms. See Sec. \ref{subsec_ofe} for definitions of the optimal and the usual fundamental estimate. It is remarkable that our approach is direct. Moreover, the upper bound of asymptotic dimension of cohomology are sharper than the previous in literatures (Explicitly we can replace relatively compact domain $U$ in \cite[(3.2.58)]{MM07} including $K$ inside  by $K$ itself). There are some results of optimal fundamental estimate for non-compact manifolds, such as  \cite{LSW, PSW}.  Based on these results, we establish strong holomorhpic Morse inequalities in various situations as follows and give a uniform and simple proof.

\begin{theorem} \label{thm_w1c}
	Let $X$ be a weakly $1$-complete manifold of dimension $n$. Let $(L,h^L)$ be a holomorphic Hermitian line bundle on $X$. Assume $K\subset X_c:=\{\varphi<c\}$ is a compact subset and $(L,h^L)$ is Griffith $q$-positive on $X\setminus K$ with $q\geq 1$. Then, there exits a Hermitian metric $\omega$ on $X$ and as $k\rightarrow \infty$, such that for any $q\le r\le n$,
	\begin{equation*}
		\sum_{j=r}^n(-1)^{j-r}\dim_\mathbb{C}H_{(2)}^j(X_c,L^k)\leq \frac{k^n}{n!}\int_{K(\ge r)}{(-1)^r c_1(L,h^L)^n}+o(k^n).
	\end{equation*}
	In particular, if $L>0$ on $X\setminus K$, for any $1\leq r\leq n$ we have, 
        \begin{equation*}
		\sum_{j=r}^n(-1)^{j-r}\dim_\mathbb{C}H^j(X,L^k)\leq \frac{k^n}{n!}\int_{K(\ge r)}{(-1)^r c_1(L,h^L)^n}+o(k^n).
	\end{equation*}
\end{theorem}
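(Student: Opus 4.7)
The plan is to verify the optimal fundamental estimate \eqref{eq_ofe} for $q\le j\le n$ on $X_c$ equipped with a suitable complete Hermitian metric, and then apply Theorem \ref{prop_main} directly. First, starting from any fixed Hermitian metric $\omega_0$ on $X$, I would fix a level $c'$ with $\sup_K\varphi<c'<c$ and choose a smooth convex increasing function $\chi\colon\R\to\R$ with $\chi\equiv 0$ on $(-\infty,c']$ and $\chi(t)\to+\infty$ as $t\to c^-$. Define $\omega:=\omega_0+\sqrt{-1}\pr\ddbar\chi(\varphi)$ on $X_c$ and $\til{h}^L:=h^L e^{-\chi(\varphi)}$. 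This construction has three useful features: (i) $(\omega,\til{h}^L)=(\omega_0,h^L)$ near $K$, hence $R^{\til L}=R^L$ on $K$ and the curvature integrals $\int_{K(\geq r)}c_1(L,h^L)^n$ are unchanged; (ii) $\omega$ is complete on $X_c$ by the standard choice of $\chi$ blowing up at $\{\varphi=c\}$, so the $L^2$-Dolbeault complex on $X_c$ is well-defined; (iii) the term $\sqrt{-1}\pr\ddbar\chi(\varphi)$ is semipositive, reinforcing the Griffith $q$-positivity of $L$ on $X\setminus K$.

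The analytic core is to establish \eqref{eq_ofe} for $q\le j\le n$ with the data $(X_c,\omega,L^k,\til{h}^L)$. Applying the Bochner--Kodaira--Nakano inequality to $s\in\Dom(\ddbar_k)\cap\Dom(\ddbar_k^*)\cap L^2_{0,j}(X_c,L^k)$, the curvature operator on $(0,j)$-forms equals $k[\sqrt{-1}R^{\til L},\Lambda]$ up to lower-order terms from torsion and scalar curvature of $\omega$. On $X_c\setminus K$, Griffith $q$-positivity together with the semipositive correction yields a uniform lower bound $c_0k|s|^2$ for $j\ge q$, and after absorbing bounded terms one arrives at \eqref{eq_ofe} with the optimal coefficient $1-C/k$. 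The same line of computation was carried out in \cite{PSW} for weakly $1$-complete manifolds, so I would mostly invoke that work.

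With \eqref{eq_ofe} for $q\le j\le n$ in hand, Theorem \ref{prop_main} applied to $X_c$ in place of $X$ directly gives
\[
\sum_{j=r}^n(-1)^{j-r}\dim H^j_{(2)}(X_c,L^k)\le\frac{k^n}{n!}\int_{K(\geq r)}(-1)^rc_1(L,h^L)^n+o(k^n),\qquad q\le r\le n,
\]
since the curvature on $K$ was preserved. For the second assertion, when $L>0$ on $X\setminus K$ the above applies with $q=1$. To pass from $H^j_{(2)}(X_c,L^k)$ to $H^j(X,L^k)$, I would invoke a Takegoshi/Ohsawa-type representation theorem: on a weakly $1$-complete manifold with $(L,h^L)$ positive outside a compact set, for an appropriate choice of metrics the natural map $H^j_{(2)}(X_c,L^k)\to H^j(X,L^k)$ is an isomorphism for $j\ge 1$ and $k$ large, and substitution yields the stated bound.

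The main obstacle is combining three requirements in the second step: the coefficient of $\|s\|^2$ on the left of \eqref{eq_ofe} must be exactly $1-C/k$---the ``optimal'' value, which is what permits $K$ itself (rather than a larger relatively compact neighborhood) to appear in the integral on the right of Theorem \ref{prop_main}; the metric $\omega$ must be complete on $X_c$; and the modifications of $h^L$ and $\omega$ must be supported away from $K$ so that the top-order curvature integral is over $K$ with respect to the original $R^L$. Balancing these simultaneously---especially producing the optimal coefficient rather than a weaker $\epsilon-C/k$ with $\epsilon<1$---is the delicate point, and is precisely the content of the preparatory framework in \cite{LSW,PSW} that the present argument builds on.
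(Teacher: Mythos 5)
Your overall route is the same as the paper's: invoke the optimal fundamental estimate on $X_c$ in bidegrees $(0,j)$, $q\le j\le n$ (this is exactly \cite[Proposition 4.3]{PSW}, restated as Proposition \ref{w1cofe}), feed it into Theorem \ref{prop_main}, and use Takegoshi's isomorphism theorem to pass from $H^j_{(2)}(X_c,L^k)$ to $H^j(X,L^k)$ when $L>0$. That skeleton is correct.

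There is, however, a concrete gap in your third step. The optimal fundamental estimate you obtain (whether by your Bochner--Kodaira sketch or by citing \cite{PSW}) holds with a compact set $K'\subset X_c$ that \emph{strictly contains} $K$: the curvature positivity of $L$ is only available on $X_c\setminus K$, and the absorption of the bounded error terms forces the localized integral $\int_{K'}|s|^2\,dV_X$ to be taken over a neighborhood $K'$ of $K$, not over $K$ itself. Consequently Theorem \ref{prop_main} gives the bound with $\int_{K'(\ge r)}(-1)^r c_1(L,h^L)^n$, and your assertion that one gets $\int_{K(\ge r)}$ ``directly \ldots since the curvature on $K$ was preserved'' does not follow; preserving the metric near $K$ says nothing about which points of $K'\setminus K$ contribute to $K'(\ge r)$. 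The missing (short but essential) argument is the one the paper supplies: since $(L,h^L)$ is Griffiths $q$-positive on $X\setminus K$, the curvature $R^L$ has at least $n-q+1$ positive eigenvalues there, hence at most $q-1$ negative eigenvalues, so no point of $K'\setminus K$ has exactly $j$ negative eigenvalues for any $j\ge q$; therefore $K'(j)=K(j)$ for all $j\ge q$ and $\int_{K'(\ge r)}=\int_{K(\ge r)}$ for $r\ge q$. With that observation inserted, your argument closes; the final reduction via \cite[Theorem 6.2]{Takegoshi:83} for the case $L>0$ is as in the paper.
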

Note that Marinescu\cite{M:92} gives a proof of weak Morse inequalities (Corollary \ref{weakMorse}) in this setting, but the results can be improved to strong Morse inequalities using our methods.

\begin{theorem}[Ma-Marinescu\cite{MM07}]\label{thm_psc}
	Let $M\Subset X$ be a smooth pseudoconvex domain in a complex manifold $X$ of dimension $n$. Let $(L,h^L)$ be a holomorphic Hermitian line bundle on $X$.  Let $(L,h^L)$ be positive in a neighbourhood of the boundary $bM$ of $M$.
	Then there are a compact subset $K\Subset M$ and a Hermitian metric $\omega$ on $X$, such that for any $1\le r\le n$,
 \begin{equation*}
     \sum_{j=r}^n(-1)^{j-r}\dim_\mathbb{C}H_{(2)}^j(M,L^k)\leq \frac{k^n}{n!}\int_{K(\ge r)}{(-1)^r c_1(L,h^L)^n}+o(k^n).
 \end{equation*}
\end{theorem}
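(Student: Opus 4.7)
My plan is to verify the optimal fundamental estimate \eqref{eq_ofe} on $M$ for all degrees $1 \leq j \leq n$ and then invoke the second part of Theorem~\ref{prop_main} with $q = 1$ to conclude. For the setup, since $(L,h^L)$ is positive on a neighbourhood $V$ of $bM$, I fix compact sets $K' \Subset K \Subset M$ with $M \setminus V \Subset K'$, so that $M \setminus K' \subset V$ lies in the positive region of $R^L$. Taking any Hermitian metric on $X$ and rescaling, I obtain $\omega$ such that $\sqrt{-1} R^L \geq \omega$ on a neighbourhood of $\overline{M \setminus K'}$.

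The core step is the Bochner--Kodaira--Nakano--H\"ormander formula with boundary term on the pseudoconvex domain $M$: for any $u \in \Dom(\ddbar_k^*) \cap L^2_{0,j}(M, L^k)$ smooth up to $bM$ with $j \geq 1$, one has
\[
\|\ddbar_k u\|^2 + \|\ddbar_k^* u\|^2 \geq \langle [\sqrt{-1} R^{L^k}, \Lambda] u, u \rangle - C_0 \|u\|^2 + \int_{bM} \mathcal{L}_\rho(u, u)\, dS,
\]
and the boundary integral is nonnegative by pseudoconvexity. I pick a smooth cutoff $\psi$ on $M$ with $0 \leq \psi \leq 1$, $\psi \equiv 0$ on $K'$, $\psi \equiv 1$ on $M \setminus K$, and set $\chi := 1 - \psi^2 \in C^\infty(M)$. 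Then $\supp \chi \subset K$ while $\psi \equiv 1$ near $bM$, which ensures $\psi s \in \Dom(\ddbar_k^*)$ whenever $s$ is. Since $\supp \psi \subset M \setminus K'$ lies in the positive region, $\langle [\sqrt{-1} R^{L^k}, \Lambda] \psi s, \psi s \rangle \geq k j \|\psi s\|^2 \geq k \|\psi s\|^2$. Applying the formula to $\psi s$, expanding $\ddbar_k(\psi s) = \psi \ddbar_k s + \ddbar \psi \wedge s$ together with its adjoint analogue, and using that $\ddbar \psi$ is bounded and supported in the compact set $K \setminus K' \subset K$, for sufficiently large $k$ we deduce
\[
\|\psi s\|^2 \leq \frac{C_1}{k} \bigl( \|\ddbar_k s\|^2 + \|\ddbar_k^* s\|^2 \bigr) + \frac{C_1}{k} \int_K |s|^2\, dV.
\]

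Combining this with the pointwise identity $|s|^2 = \chi |s|^2 + \psi^2 |s|^2$ (valid because $\chi + \psi^2 = 1$) and the bound $\int_M \chi |s|^2\, dV \leq \int_K |s|^2\, dV$ yields $\|s\|^2 \leq (1 + C_1/k) \int_K |s|^2\, dV + (C_1/k)(\|\ddbar_k s\|^2 + \|\ddbar_k^* s\|^2)$. Absorbing $(C_1/k) \int_K |s|^2\, dV \leq (C_1/k) \|s\|^2$ onto the left-hand side produces exactly \eqref{eq_ofe} for all $1 \leq j \leq n$, and Theorem~\ref{prop_main} with $q = 1$ then yields the stated strong Morse inequalities for all $1 \leq r \leq n$. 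The main obstacle is the coefficient bookkeeping required to reach the \emph{optimal} prefactor $(1 - C/k)$ in \eqref{eq_ofe}: the key is to use the smooth partition $\chi + \psi^2 = 1$ (defining $\chi$ from a smooth $\psi$ rather than taking the non-smooth $\sqrt{1-\chi}$), which allows direct application of the BKNH formula to the smooth form $\psi s$, and to verify that the commutator error from $\ddbar \psi$ is localized in $K \setminus K'$, so that it contributes as $(C/k) \int_K |s|^2\, dV$ rather than $C \int_M |s|^2\, dV$.
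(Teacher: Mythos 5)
Your proposal is correct and follows the same route as the paper: establish the optimal fundamental estimate on $M$ in every bidegree $(0,j)$, $1\le j\le n$, and feed it into the second half of Theorem \ref{prop_main} with $q=1$. The only difference is that the paper simply quotes that estimate from \cite{LSW} (Proposition \ref{pse_ofe}), whereas you re-derive it via the Morrey--Kohn--H\"ormander inequality with the smooth partition $\psi^2+\chi=1$ and absorption of the $O(1/k)\int_K|s|^2$ error; this derivation is the standard one (modulo the harmless facts that the torsion of a non-K\"ahler $\omega$ forces a constant factor in front of $\|\ddbar_k s\|^2+\|\ddbar^{*}_{k}s\|^2$, and that one must use density of forms smooth up to $bM$ in the graph norm) and matches the argument in \cite{LSW}.
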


  \begin{theorem}[Bouche{\cite{Bou:89}}]\label{thm_q_convex}
  	Let $X$ be a $q$-convex manifold of dimension $n$ and $1\leq q \leq n$. Let $(L,h^L)$ be a holomorphic Hermitian line bundle on $X$. Suppose $R^L$ has at least $n-s+1$ non-negative eigenvalues on $X\setminus M$ for a compact subset $M$ with $1\leq s\leq n$.
  	Then for any $s+q-1\le r\le n$, we have 
  \begin{equation*}
     \sum_{j=r}^n(-1)^{j-r}\dim_\mathbb{C}H^j(X,L^k)\leq \frac{k^n}{n!}\int_{M(\ge r)}{(-1)^r c_1(L,h^L)^n}+o(k^n).
 \end{equation*}
  \end{theorem}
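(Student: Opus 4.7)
The strategy is to reduce the theorem to Theorem \ref{prop_main} applied on a sublevel set of an exhaustion function and then to transfer the resulting $L^{2}$-estimate to sheaf cohomology via Andreotti-Grauert theory. Since the second conclusion of Theorem \ref{prop_main} is what we need, the main task is to verify that the optimal fundamental estimate \eqref{eq_ofe} holds in every degree $j\ge s+q-1$.

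By the $q$-convexity of $X$, fix a smooth exhaustion function $\varphi\colon X\to[a,b)$ such that $i\partial\ddbar\varphi$ has at least $n-q+1$ positive eigenvalues outside some sublevel set $X_{c_0}=\{\varphi<c_0\}\Supset M$, and twist the bundle metric as $h^{L}_{\chi}:=h^{L}e^{-\chi(\varphi)}$ for a strictly convex increasing function $\chi$ to be specified. The twisted curvature reads
\[
R^{L_{\chi}}=R^{L}+\chi'(\varphi)\,\partial\ddbar\varphi+\chi''(\varphi)\,\partial\varphi\wedge\ddbar\varphi.
\]
A min-max count shows that $R^{L}+\chi'(\varphi)\,\partial\ddbar\varphi$ has at least $(n-s+1)+(n-q+1)-n=n-s-q+2$ positive eigenvalues on $X\setminus X_{c_0}$; choosing $\chi$ to grow fast enough, these positive eigenvalues dominate the remaining at most $s+q-2$ non-positive ones, so that the Kodaira-Nakano operator $[iR^{L_{\chi}},\Lambda_{\omega}]$ is uniformly positive on $(0,j)$-forms outside $X_{c_0}$ for every $j\ge s+q-1$. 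Together with the Bochner-Kodaira-Nakano formula this yields \eqref{eq_ofe} on any sublevel set $X_{c}$ ($c_0<c<b$) in every degree $j\ge s+q-1$ with $K:=\overline{X_{c_0}}$; these OFE estimates in the $q$-convex setting are exactly what \cite{LSW,PSW} supply, and we invoke them directly.

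Applying the second part of Theorem \ref{prop_main} for $r\ge s+q-1$ then gives
\[
\sum_{j=r}^{n}(-1)^{j-r}\dim H^{j}_{(2)}(X_{c},L^{k})\le\frac{k^{n}}{n!}\int_{K(\ge r)}(-1)^{r}c_{1}(L,h^{L})^{n}+o(k^{n}).
\]
At every point of $K\setminus M$ the form $R^{L}$ has at most $s-1<r$ negative eigenvalues, so $K(\ge r)=M(\ge r)$, matching the integration region in the statement. To pass to sheaf cohomology, one uses Andreotti-Grauert theory for $q$-convex manifolds: $H^{j}(X,L^{k})$ is finite dimensional for $j\ge q$ and the restriction $H^{j}(X,L^{k})\to H^{j}(X_{c},L^{k})$ is an isomorphism for $j\ge q$ and $c$ large enough; a standard identification as in \cite{MM07} matches $H^{j}(X_{c},L^{k})$ with $H^{j}_{(2)}(X_{c},L^{k})$ in the chosen twisted metric. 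Since $s+q-1\ge q$, this substitution is valid in every degree $j\ge r$ occurring in the sum, and the theorem follows.

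The main obstacle is arranging, via the careful choice of $\chi$, that the OFE holds simultaneously in every degree $j\ge r$ rather than in a single degree, while preserving the optimal coefficient $1$ in front of the localisation term $\int_{K}|s|^{2}dV_{X}$. This is precisely the bookkeeping that distinguishes the strong from the weak Morse inequalities, and it is handled by the $q$-convex OFE results of \cite{LSW,PSW} on which this argument rests.
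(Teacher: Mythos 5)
Your overall route is the same as the paper's: twist the metric on $L^k$ by $e^{-k\chi(\varrho)}$, invoke the $q$-convex optimal fundamental estimate of \cite{LSW} (Proposition \ref{1coxFE}) on a sublevel set $X_c$, apply Theorem \ref{prop_main}, localize the integral by an eigenvalue count, and pass from $H^j_{(2)}(X_c,L^k)$ to $H^j(X,L^k)$ by H\"ormander/Andreotti--Grauert. However, there is a genuine gap in your bookkeeping for the twisted metric. Theorem \ref{prop_main}, applied to $(L,h^L_\chi)$, bounds the alternating sum by $\int_{K(\ge r,\,h^L_\chi)}(-1)^r c_1(L,h^L_\chi)^n$: both the sets $K(j)$ and the integrand are computed from the \emph{twisted} curvature $R^{L_\chi}$. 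You take $\chi$ strictly convex and increasing everywhere (indeed ``growing fast enough''), yet you carry out the localization $K(\ge r)=M(\ge r)$ using the eigenvalues of the \emph{untwisted} $R^L$ and write the final bound with $c_1(L,h^L)^n$. With your choice of $\chi$ these do not agree: on $M$ itself, and on the exceptional set of the $q$-convex structure where $\partial\ddbar\varrho$ has no guaranteed positivity, the twist genuinely changes both the signature of the curvature and the top Chern form, so the inequality you obtain is not the one in the statement. The fix is exactly the paper's choice: put $M$ and the exceptional set inside a sublevel set $X_{u_0}$, take $\chi\equiv 0$ on $(-\infty,u_0)$ and $\chi'\ge C$ only on the annulus $X_v\setminus\overline X_u$ where the OFE requires it; then the min--max count forces $K'(j,h^L_\chi)\subset X_{u_0}$ for $j\ge s+q-1$, and on $X_{u_0}$ one has $h^L_\chi=h^L$, so the twisted and untwisted data coincide where it matters. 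Your requirement that $\chi$ be strictly convex increasing everywhere is incompatible with this and must be dropped.

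A secondary point: the optimal fundamental estimate in the $q$-convex setting holds for all bidegrees $(0,j)$ with $j\ge q$ and comes solely from the $q$-convexity (via the twisted metric and a suitable $\omega$); the hypothesis that $R^L$ has $n-s+1$ non-negative eigenvalues plays no role in \eqref{eq_ofe} and enters only in the eigenvalue count that shrinks the integration domain to $M(\ge r)$. Your heuristic derivation, which mixes the two hypotheses to get the OFE only for $j\ge s+q-1$, mischaracterizes the cited estimate; moreover ``choosing $\chi$ to grow fast enough'' cannot by itself make $[\sqrt{-1}R^{L_\chi},\Lambda_\omega]$ positive, since enlarging $\chi'$ also amplifies the up-to-$(q-1)$ negative directions of $\partial\ddbar\varrho$. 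Since you ultimately defer to Proposition \ref{1coxFE} this does not break the argument, but the restriction $r\ge s+q-1$ should be attributed to the localization step, not to the range of validity of the OFE.
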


\begin{theorem}\label{thm_complete}
	Let $(X,\Theta)$ be a complete Hermitian manifold of dimension $n$. Let $(L,h^L)$ be a holomorphic Hermitian line bundle  on $X$ such that $\Theta=c_1(L,h^L)$ on $X\setminus M$ for a compact subset $M$. Then for any $1\le r\le n$, we have
  \begin{equation*}
     \sum_{j=r}^n(-1)^{j-r}\dim_\mathbb{C}H_{(2)}^j(X,L^k\otimes K_X)\leq \frac{k^n}{n!}\int_{M(\ge r)}{(-1)^r c_1(L,h^L)^n}+o(k^n).
 \end{equation*}
\end{theorem}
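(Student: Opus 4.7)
The plan is to verify the optimal fundamental estimate \eqref{eq_ofe} for $(0,j)$-forms with values in $L^k\otimes K_X$, $1\le j\le n$, and then invoke Theorem \ref{prop_main} applied to the sequence $L^k\otimes K_X$; since $K_X$ is a fixed line bundle, the twist shifts the total curvature by an $O(1)$ quantity and therefore neither changes the leading $k^n$-asymptotics nor the sets $M(\ge r)$, which are determined entirely by $R^L$. So the entire problem reduces to producing the optimal estimate.

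The starting point is the canonical isometry $L^2_{0,j}(X,L^k\otimes K_X)\cong L^2_{n,j}(X,L^k)$, which lets me invoke the Bochner--Kodaira--Nakano inequality in its cleanest form: for $(n,j)$-forms $u$ with values in $L^k$,
\begin{equation*}
\|\ddbar_k u\|^2 + \|\ddbar_k^* u\|^2 \ge \langle[kR^L,\Lambda_\Theta]u,u\rangle - C\|u\|^2,
\end{equation*}
where the $O(1)$ correction collects contributions from the torsion of the Hermitian metric and the curvature of $K_X$. On the exterior $X\setminus M$ the hypothesis $\Theta = c_1(L,h^L)$ makes $\Theta$ K\"ahler and forces $[R^L,\Lambda_\Theta]$ to act on $(n,j)$-forms by the scalar $2\pi j$, so for $s$ supported there,
\begin{equation*}
(2\pi kj - C)\|s\|^2 \le \|\ddbar_k s\|^2 + \|\ddbar_k^* s\|^2, \qquad 1\le j\le n.
\end{equation*}

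Next I would fix a smooth cutoff $\chi$ equal to $1$ on $M$ with support in a compact set $K$, and decompose $s = \chi s + (1-\chi)s$. Completeness of $(X,\Theta)$ ensures, by Andreotti--Vesentini, that compactly supported smooth forms are dense in $\Dom(\ddbar_k)\cap\Dom(\ddbar_k^*)$ in the graph norm and that the formal adjoint coincides with the Hilbert-space adjoint; this legitimizes cutting off and integration by parts. Applying the exterior bound to $(1-\chi)s$ and controlling the commutator errors $[\ddbar_k,\chi]$ and $[\ddbar_k^*,\chi]$, which are zeroth-order operators supported in $K$, then yields after absorption
\begin{equation*}
\Bigl(1-\frac{C'}{k}\Bigr)\|s\|^2 \le \frac{C'}{k}\bigl(\|\ddbar_k s\|^2 + \|\ddbar_k^* s\|^2\bigr) + \int_K |s|^2\,dV_X,
\end{equation*}
which is precisely \eqref{eq_ofe} for $L^k\otimes K_X$ with $q=1$. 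Theorem \ref{prop_main} then delivers the claimed strong Morse inequalities for every $1\le r\le n$.

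The main obstacle is arranging that the coefficient in front of $\int_K|s|^2\,dV_X$ is exactly $1$ --- the \emph{optimal} requirement --- rather than some larger finite constant. This calibration relies on the sharp, constant-free lower bound $2\pi kj\|s\|^2$ on the exterior, which in turn requires the precise equality $\Theta = c_1(L,h^L)$ on $X\setminus M$; any mismatch would introduce a prefactor that cannot be removed merely by enlarging $K$.
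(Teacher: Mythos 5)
Your proposal is correct and follows essentially the paper's route: the paper simply invokes the optimal fundamental estimate for $(n,q)$-forms on complete manifolds with $\Theta=c_1(L,h^L)$ outside $M$ (Lemma \ref{lem_complete}, i.e.\ \cite[Lemma 3.6]{LSW}, whose proof is exactly your Bochner--Kodaira--Nakano plus cutoff/density argument) and then applies Theorem \ref{prop_main} to $L^k\otimes K_X$. The one step you leave implicit is that Theorem \ref{prop_main} produces the integral over $K(\ge r)$ for the enlarged compact set $K\supset M$ coming from the cutoff, and one must observe that $\sqrt{-1}R^L=2\pi\Theta>0$ on $X\setminus M$ has no negative eigenvalues there, so $K(j)=M(j)$ for every $j\ge 1$ and the bound with $\int_{M(\ge r)}$ follows.
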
Ma-Marinescu {\cite{MM07}} give a lower bound of $\dim_\mathbb{C}H_{(2)}^0(X,L^k\otimes K_X)$ in this case, but our methods could get strong Morse inequalities.

 This paper is organized as follows. In Sec. \ref{Sec_pre} we introduce some definitions and elementary facts related to $L^2$-cohomology and $L^2$ Hodge decomposition, some useful estimates will be mentioned.  In Sec. \ref{sec_Asy}, we recall Berman's scaling technique and prove a local version of strong Morse inequalities for non-compact manifolds. Strong holomorphic Morse inequalities (Theorem \ref{prop_main}) will be proved in Sec. \ref{sec_HMI}. In Sec. \ref{Sec_l2wmi} we give some applications and examples of our results and prove Theorem \ref{thm_w1c}--\ref{thm_complete}.

\section{Preliminaries and Notations} \label{Sec_pre}

Let $(X, \omega)$ be a Hermitian manifold of dimension $n$ and $(F, h^F)$ and $(L,h^L)$ be holomorphic Hermitian vector bundles on $X$ with $\rank(L)=1$.
Let $\Omega^{p,q}(X, F)$ be the space of smooth $(p,q)$-forms on $X$ with values in $F$ for $p,q\in \N$. If $\rank(F)=1$, the curvature of $(F, h^F)$ is defined by $R^F=\ddbar\dbar \log|s|^2_{h^{F}}$ for any local holomorphic frame $s$ and the Chern-Weil form of the first Chern class of $F$ is denoted by $c_1(F, h^F)=\frac{\sqrt{-1}}{2\pi}R^F$, which is a real $(1,1)$-form on $X$. The volume form is given by $dV_{X}:=\omega_n:=\frac{\omega^n}{n!}$.

We identify the two-form $R^F$ at every $x\in M$ with a linear endomorphism $Q_F$ given by
\begin{equation*}
    R^F_x (\alpha, \ol{\beta}) = \langle Q_F \alpha, \ol{\beta}\rangle_\omega,\; \forall \, \alpha, \beta\in T^{1,0}_xX. 
\end{equation*}

\subsection{$L^2$-coholomogy}
Let $\Omega^{p,q}_{0}(X, F)$ be the subspace of $\Omega^{p,q}(X, F)$ consisting of elements with compact support.
The $L^2$-scalar product on $\Omega^{p,q}_0(X, F)$ is given by
\begin{equation*}
(s_1,s_2)_X:=\int_X \langle s_1(x), s_2(x) \rangle_h dV_X(x)
\end{equation*}
where $\langle\cdot,\cdot\rangle_h:=
\langle\cdot,\cdot\rangle_{h^F,\omega}$
is the pointwise Hermitian inner product induced by $\omega$ and $h^F$.
We denote by $L^2_{p,q}(X, F)$, the $L^2$ completion of $\Omega^{p,q}_0(X, F)$.

Let $\ddbar^{F}: \Omega_0^{p,q} (X, F)\rightarrow L^2_{p,q+1}(X,F) $ be the Dolbeault operator and let  $ \ddbar^{F}_{\max} $ be its maximal extension. From now on we still denote the maximal extension by $ \ddbar^{F} :=\ddbar^{F}_{\max} $ and  the corresponding Hilbert space adjoint by $\ddbar^{F*}:=\ddbar^{F*}_H:=(\ddbar^{F}_{\max})_H^*$. We write $\ddbar^F_k:=\ddbar^{L^k\otimes F}$ for simplification.
Consider the complex of closed, densely defined operators
$L^2_{p,q-1}(X,F)\xrightarrow{T=\ddbar^{F}}L^2_{p,q}(X,F)\xrightarrow{S=\ddbar^{F}} L^2_{p,q+1}(X,F)$,
then $(\ddbar^{F})^2=0$. By \cite[Proposition 3.1.2]{MM07}, the operator defined by
\begin{eqnarray*}\nonumber
\Dom(\square^{F})&=&\{s\in \Dom(S)\cap \Dom(T^*):
Ss\in \Dom(S^*),~T^{*}s\in \Dom(T) \}, \\
\square^{F}s&=&S^{*}Ss+TT^{*}s \quad \mbox{for}~s\in \Dom(\square^{F}),
\end{eqnarray*}
is a positive, self-adjoint extension of Kodaira Laplacian, called the Gaffney extension.
	The space of harmonic forms $\cH^{p,q}(X,F)$ is defined by
	\begin{equation*}
	\cH^{p,q}(X,F):=\Ker(\square^{F})\cap  L^2_{p,q}(X,F) =\{s\in \Dom(\square^{F})\cap L^2_{p,q}(X, F): \square^{F}s=0 \}.
	\end{equation*}
	The $q$-th reduced (resp. non-reduced) $L^2$-Dolbeault cohomology are defined by, respectively,
	\begin{equation}\label{eq46}
	\overline{H}^{0,q}_{(2)}(X,F):=\dfrac{\Ker(\ddbar^{F})\cap  L^2_{0,q}(X,F) }{[ \Im( \ddbar^{F}) \cap L^2_{0,q}(X,F)]}, \quad H^{0,q}_{(2)}(X,F):=\dfrac{\Ker(\ddbar^{F})\cap  L^2_{0,q}(X,F) }{ \Im( \ddbar^{F}) \cap L^2_{0,q}(X,F)},
	\end{equation}
	where $[V]$ denotes the closure of the space $V$.
According to the general regularity theorem of elliptic operators,
$s\in \cH^{p,q}(X,F) $ implies $s\in\Omega^{p,q}(X,F)$.

\subsection{$L^2$-Weak Hodge decomposition}
It is obvious that $\cH^{0,q}(X,F)=\Ker(S)\cap\Ker(T^*)$ by the definition of $\square^F$. It is well known that
\begin{equation*}
\begin{split}
\Im(T)^{\perp}=&\Ker(T^*)
=(\Ker(T^*)\cap\Ker(S))\oplus (\Ker(T^*)\cap\Ker(S)^{\perp})\\
=&\cH^{0,q}(X,F)\oplus (\Ker(T^*)\cap\Ker(S)^{\perp})   
=\cH^{0,q}(X,F)\oplus (\Ker(T^*)\cap[\Im(S^*)])\\
=&\cH^{0,q}(X,F)\oplus [\Im(S^*)],
\end{split}
\end{equation*}
where $\Im(T)^{\perp}$ denotes  orthogonal complement of   $\Im(T)$ in $L^2_{0,q}(X,F)$ and the last equality comes from $T^{*}S^{*}=0$. 

We also have 
\begin{equation*}
\begin{split}
\Ker(S)=&([\Im(T)]\cap\Ker(S))\oplus (\Im(T)^{\perp}\cap\Ker(S))
=[\Im(T)]\oplus (\Im(T)^{\perp}\cap\Ker(S))\\
=&[\Im(T)]\oplus (\Ker(T^*)\cap\Ker(S))
=\cH^{0,q}(X,F)\oplus [\Im(T)],
\end{split}
\end{equation*}
the second equality comes from $ST=0.$ Since $L^2_{0,q}(X,F)= \Im(T)^{\perp}\oplus[\Im(T)]$, we get weak Hodge decomposition\cite[(3.1.21)]{MM07}
\begin{equation}\label{weak H}
    \begin{split}
        L^2_{0,q}(X,F)=&\cH^{0,q}(X,F)\oplus [\Im(\ddbar^{F*})]\oplus[\Im(\ddbar^F)],\\
        \Ker(\ddbar^F)=&\cH^{0,q}(X,F)\oplus [\Im(\ddbar^F)].
    \end{split}
\end{equation}
From weak Hodge decomposition (\ref{weak H}) and definition of $L^2$-Dolbeault cohomology group (\ref{eq46}), we get a canonical isomorphism
\begin{equation*}
\overline{H}^{0,q}_{(2)}(X,F)\cong \cH^{0,q}(X,F),
\end{equation*} which associates to each cohomology class its unique harmonic representative.  Set $\overline{H}^{q}_{(2)}(X,F):=\overline{H}^{0,q}_{(2)}(X,F)$ and $H^{q}_{(2)}(X,F):=H^{0,q}_{(2)}(X,F)$. The sheaf cohomology of holomorphic sections of $F$ is isomorphic to the Dolbeault cohomology, $H^\bullet(X,F)\cong H^{0,\bullet}(X,F)$. 

\subsection{Fundamental estimates and strong Hodge decomposition}\label{subsec_ofe}
We say the \textbf{fundamental estimate} holds in bidegree $(0,q)$ for forms with values in $F$ with $0\leq q\leq n$, if there exist a compact subset $K\subset X$ and $C>0$ such that, for $s\in \Dom(\ddbar^F)\cap\Dom(\ddbar^{F,*})\cap L^2_{0,q}(X,F)$, we have
\begin{equation*}
\|s\|^2\leq C\left(\|\ddbar^F s\|^2+\|\ddbar^{F*}s\|^2+\int_K|s|^2dV_X\right).
\end{equation*}

Then we claim the strong Hodge decomposition under the fundamental estimate.

\begin{theorem}[{\cite[Theorem 3.1.8]{MM07}}]
    If the fundamental estimate holds in bidegree $(0,q)$, then $\ddbar^F$ on $L_{0,q-1}^2(X,F)$ and $\square^F$ on $L_{0,q}^2(X,F)$ have closed range and we have the strong Hodge decomposition
    \begin{equation}\label{strong H}
    \begin{split}
        L^2_{0,q}(X,F)=&\cH^{0,q}(X,F)\oplus \Im(\ddbar^{F*})\oplus\Im(\ddbar^F),\\
        \Ker(\ddbar^F)=&\cH^{0,q}(X,F)\oplus \Im(\ddbar^F).
    \end{split}
\end{equation}
    Moreover, $\cH^{0,q}(X,F)$ is finite-dimensional. We have a canonical isomorphism
\begin{equation*}
    \mathscr{H}^{0,q}(X,F)\rightarrow H_{(2)}^{0,q}(X,F),\quad s\mapsto[s].
\end{equation*}
\end{theorem}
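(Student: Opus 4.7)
The plan is to leverage the fundamental estimate together with a local compactness argument for the Kodaira Laplacian to upgrade the weak Hodge decomposition \eqref{weak H} to the strong one \eqref{strong H}. The key intermediate step is a Poincar\'e-type inequality on the orthogonal complement $\cH^{0,q}(X,F)^{\perp}$, from which closed range of $\square^F$ and of $\ddbar^F$ on the neighboring degrees follow by standard functional analysis. Throughout, I would use interior elliptic regularity for $\square^F$: on a slightly larger compact set $K \Subset K' \Subset X$, G{\aa}rding's inequality combined with Rellich's lemma makes the inclusion of $\{s : \|s\|+\|\ddbar^F s\|+\|\ddbar^{F*}s\|\le 1\}$ into $L^2(K)$ compact.

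\textbf{Step 1 (finite-dimensionality).} For $s\in\cH^{0,q}(X,F)$, the fundamental estimate reduces to $\|s\|^2\le C\int_K|s|^2\,dV_X$. Since harmonic forms are smooth and satisfy the elliptic $L^2$-estimate $\|s\|_{H^1(K)}\le C_K(\|s\|_{L^2(K')}+\|\square^Fs\|_{L^2(K')})=C_K\|s\|_{L^2(K')}$, the unit ball of $\cH^{0,q}(X,F)$ is precompact in $L^2(K)$ by Rellich, and then in $L^2(X)$ by the fundamental estimate; hence $\cH^{0,q}(X,F)$ is finite-dimensional.

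\textbf{Step 2 (Poincar\'e inequality).} I would show that there exists $C'>0$ such that
\begin{equation*}
\|s\|^2\le C'\bigl(\|\ddbar^F s\|^2+\|\ddbar^{F*}s\|^2\bigr),\qquad s\in \Dom(\ddbar^F)\cap\Dom(\ddbar^{F*})\cap \cH^{0,q}(X,F)^{\perp}\cap L^2_{0,q}(X,F),
\end{equation*}
by contradiction. If this fails, there is a sequence $s_n$ with $\|s_n\|=1$, $s_n\perp\cH^{0,q}(X,F)$, and $\|\ddbar^F s_n\|+\|\ddbar^{F*}s_n\|\to 0$. The fundamental estimate forces $\int_K|s_n|^2\,dV_X$ to stay bounded below; the local elliptic estimate together with Rellich extracts a subsequence convergent in $L^2(K)$, and then the fundamental estimate applied to the differences $s_n-s_m$ shows the sequence is Cauchy in $L^2(X)$. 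The limit $s$ satisfies $\|s\|=1$, lies in $\cH^{0,q}(X,F)^{\perp}$, and has $\ddbar^F s=0=\ddbar^{F*}s$, so it is harmonic, a contradiction.

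\textbf{Step 3 (closed range and strong Hodge decomposition).} On $\Dom(\square^F)\cap\cH^{0,q}(X,F)^{\perp}$, Step 2 and Cauchy--Schwarz give $\|s\|^2\le C'(\square^F s,s)\le C'\|s\|\,\|\square^F s\|$, so $\square^F$ has closed range and admits a bounded Green operator $N$ on $\cH^{0,q}(X,F)^{\perp}$. Writing $\mathrm{Id}=\ddbar^F\ddbar^{F*}N+\ddbar^{F*}\ddbar^F N$ on $\cH^{0,q}(X,F)^{\perp}$ exhibits the strong Hodge decomposition \eqref{strong H}; closedness of $\Im(\ddbar^F)$ and $\Im(\ddbar^{F*})$ follows because, for any $v\in\overline{\Im(\ddbar^F)}\subset\Ker(\ddbar^F)\cap\cH^{0,q}(X,F)^{\perp}$, the element $\ddbar^{F*}Nv$ is a genuine preimage (the cross term vanishes because $\ddbar^F v=0$ together with Step 2 forces $\ddbar^F Nv=0$). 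The canonical map $\cH^{0,q}(X,F)\to H^{0,q}_{(2)}(X,F)$ is then an isomorphism as a direct reading of \eqref{eq46} against \eqref{strong H}. The main obstacle I expect is the functional-analytic bookkeeping in Step 3, particularly verifying that each summand in \eqref{strong H} is genuinely closed rather than merely the closure appearing in \eqref{weak H}; this is where the Poincar\'e inequality on $\cH^{0,q}(X,F)^{\perp}$ is decisive.
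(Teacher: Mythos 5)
Your proof is correct and follows essentially the standard argument for the cited result [MM07, Theorem 3.1.8]; the paper itself gives no proof of this statement, only the citation, so there is nothing to diverge from. The one imprecision is in Step 3: the vanishing of the cross term $\ddbar^{F*}\ddbar^F Nv$ should be justified by orthogonality --- both $v$ and $\ddbar^F\ddbar^{F*}Nv$ lie in $\Ker(\ddbar^F)$, while $\ddbar^{F*}\ddbar^F Nv\in\Im(\ddbar^{F*})\subset\Ker(\ddbar^F)^{\perp}$, so the difference $v-\ddbar^F\ddbar^{F*}Nv=\ddbar^{F*}\ddbar^F Nv$ vanishes --- rather than by the degree-$q$ Poincar\'e inequality of Step 2, which does not apply to the $(0,q+1)$-form $\ddbar^F Nv$.
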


Define $H^{q}_{(2)}(X,F):=H^{0,q}_{(2)}(X,F)$, hence, $H^{q}_{(2)}(X,F)\cong\cH^{0,q}(X,F)$, under the fundamental estimate condition.
 We say the \textbf{optimal fundamental estimate} holds in bidegree $(0,q)$ for forms with values in $L^k$ with $0\leq q\leq n$, if there exist a compact subset $K\subset X$ and $C_0>0$ such that, for sufficiently large $k$ we have
for $s\in \Dom(\ddbar^F_k)\cap \Dom(\ddbar^{F*}_{k})\cap L^2_{0,q}(X,L^k)$,
\begin{equation*}
\left(1-\frac{C_0}{k}\right)||s||^2\leq \frac{C_0}{k}\left(||\ddbar^F_ks||^2+||\ddbar^{F*}_{k,H}s||^2\right)+\int_{K} |s|^2 dV_X.
\end{equation*}
Note that the condition of Theorem \ref{prop_main} is optimal fundamental estimate holds for some $(0,j)$ form. With respect to forms with values in $L^k$, optimal fundamental estimate holds implies fundamental estimate holds for sufficiently large $k$. The optimal means the coefficient of the term $\int_K |s|^2 dv_X$ is $1$, the above optimal fundamental estimate was introduced in {\cite{LSW}}.

\section{Asymptotics of Bergman Kernel Functions for Lower Energy Forms}\label{sec_Asy}
Berman proved a local version of weak holomorphic Morse inequalities, which holds regardless of compactness or completeness. Refer to \cite[Theorem 1.1, Remark 1.3]{Be04} and \cite[Corollary 1.4]{HM:14} for details. In this section, we use Berman's technique and prove a local version of strong Morse inequalities for non-compact manifolds. 
\subsection{Kernel function and extremal function}

Let $(X,\omega)$ be a Hermitian manifold of dimension $n$. Let $(L,h^L)$ be a holomorphic Hermitian line bundle on $X$.
Let $\{{s}^k_j\}_{j\geq 1}$ be an orthonormal basis of $\cH^{0,q}({X},{L}^k)$, $0\leq q\leq n$, and $|\cdot|:=|\cdot|_{{h}_k,{\omega}}$ the point-wise Hermitian norm. The \textbf{Bergman kernel function} on $X$ is defined by
\begin{equation*}
{B}^{q}_k({x}):=\sum_{j}|s^k_j({x})|_{h}, \;{x}\in {X}.
\end{equation*}
The \textbf{extremal function} on $X$ is defined by 
\begin{equation*}
{S}^{q}_k({x}):=\sup_{\alpha\in \cH^{q}(X,L^{k})}\frac{|\alpha(x)|^{2}_h}{||\alpha||^{2}},
\;{x}\in {X}.
\end{equation*}

Let $\square^E_k$ be the Gaffney extension of Kodaira Laplacian. Let $0\leq q\leq n$ and $\lambda\geq0$.
Let $E^q_{\leq \lambda}(\square_k): L^2_{0,q}(X,L^k)\rightarrow \cE^q(\lambda,\square_k):=\Im E^q_{\leq \lambda}(\square_k)$ be the spectral projection of $\square_k$. By the elliptic property of $\square_k$, it's not difficult to deduce $\cE^q(\lambda,\square_k)\subset \Omega^{0,q}(X,L^{k})$. Let $\{s_j\}_{j\geq 1}$ be an orthonormal frame of $\cE^q(\lambda,\square_k^E)$.
Let 
\begin{equation*}
    B^q_{\leq\lambda}(x):=\sum_{j}|s_j(x)|^2_h, \;{x}\in {X}
\end{equation*}
be the \textbf{Bergman kernel function for lower energy forms}. Let
\begin{equation*}
    {S}^{q}_{\leq \lambda}({x}):=\sup_{\alpha\in \cE^q(\lambda,\square_k)}\frac{|\alpha(x)|^{2}_h}{||\alpha||^{2}},
\;{x}\in {X}
\end{equation*}
be the \textbf{extremal function for lower energy forms}.
We denote the spectrum counting function of $\square_k$ by
$N^q(\lambda,\square_k):=\dim \cE^q(\lambda,\square_k).$
By the spectral theorem,
\begin{equation*}
\cH^{0,q}(X,L^k)=\cE^q(0,\square_k)\subset\cE^q(\lambda,\square_k)\subset \Dom(\square_k)\cap L^2_{0,q}(X,L^k).
\end{equation*}

We can also define component versions of $S_{k}^q$ and $S_{\leq\lambda}^q$. For a given orthonormal frame $e_{x}^{I}$ in $\Lambda^{0,q}_{x}(X,L^{k})$, let
\begin{equation*}
    {S}^{q}_{k,I}({x}):=\sup_{\alpha\in \cH^{q}(X,L^{k})}\frac{|\alpha_{I}(x)|^{2}_h}{||\alpha||^{2}},\;{x}\in {X}
\end{equation*}
be the $I$- component of ${S}^{q}_k({x})$, where $\alpha_{I}(x)$ denotes the component of $\alpha$ along $e^{I}_x$. And let
\begin{equation*}
    {S}^{q}_{\leq\lambda,I}({x}):=\sup_{\alpha\in \cE^q(\lambda,\square_k)}\frac{|\alpha_{I}(x)|^{2}_h}{||\alpha||^{2}},\;{x}\in {X}
\end{equation*}
be the $I$- component of ${S}^{q}_{\leq\lambda}({x})$.

For computing Bergman kernel function, we need the relation as follows.
\begin{lemma}[{\cite[Lemma 2.1]{Be04}}]\label{lebs}
    With notation as above, we have
    \begin{equation*}
        {S}^{q}_k({x})\leq {B}^{q}_k({x})\leq \sum_{I}^{'}S^{q}_{k,I}(x),\quad \forall x\in X.
    \end{equation*}
    We also have the relation between Bergman kernel function and extremal function for lower energy form
    \begin{equation*}
         {S}^{q}_{\leq \lambda}({x})\leq  B^q_{\leq\lambda}(x)\leq \sum_{I}^{'}S^{q}_{\leq\lambda,I}({x}),\quad \forall x\in X,
    \end{equation*}
   where the summations above are summing for strictly increasing index $I$. 
\end{lemma}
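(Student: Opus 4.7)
The plan is to recognize both inequalities as instances of a single linear-algebraic fact. For any (finite-dimensional) Hilbert space $V$ of sections on which pointwise evaluation at $x$ is a bounded linear map into the fiber, the operator norm squared of this evaluation equals the extremal quantity at $x$, while the Hilbert--Schmidt norm squared equals the kernel function $\sum_j|s_j(x)|_h^2$ computed in any orthonormal basis $\{s_j\}$; the inequality between operator and Hilbert--Schmidt norms, together with componentwise Parseval, then yields the two estimates.

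For the first inequality $S^q_k(x)\le B^q_k(x)$, view the evaluation $\mathrm{ev}_x\colon\cH^{0,q}(X,L^k)\to\Lambda^{0,q}_x(X,L^k)$, $\alpha\mapsto\alpha(x)$, as a linear map. By definition $S^q_k(x)=\|\mathrm{ev}_x\|_{\mathrm{op}}^2$. Writing $\alpha=\sum_j c_j s_j^k$ with $\|\alpha\|^2=\sum_j|c_j|^2$ and applying Cauchy--Schwarz in the fiber to $\alpha(x)=\sum_j c_j s_j^k(x)$ gives
\[
|\alpha(x)|_h^2\le \|\alpha\|^2\sum_j|s_j^k(x)|_h^2=\|\alpha\|^2\, B^q_k(x);
\]
taking the supremum over $\alpha$ yields the claim.

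For the right-hand inequality, fix an orthonormal frame $\{e^I_x\}$ of $\Lambda^{0,q}_x(X,L^k)$ and consider the component functional $\mathrm{ev}_{I,x}\colon\cH^{0,q}(X,L^k)\to\Complex$, $\alpha\mapsto\alpha_I(x)$. By definition its operator norm squared is $S^q_{k,I}(x)$, while Parseval applied to the orthonormal basis $\{s_j^k\}$ delivers $\|\mathrm{ev}_{I,x}\|^2=\sum_j|(s_j^k)_I(x)|^2$. Summing over strictly increasing $I$ and interchanging sums produces
\[
\sum_I{}' S^q_{k,I}(x)=\sum_I{}'\sum_j|(s_j^k)_I(x)|^2=\sum_j|s_j^k(x)|_h^2=B^q_k(x),
\]
so the stated bound is in fact an equality.

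The lower-energy assertion $S^q_{\le\lambda}(x)\le B^q_{\le\lambda}(x)\le\sum_I{}' S^q_{\le\lambda,I}(x)$ is proved by the identical argument applied to the finite-dimensional space $\cE^q(\lambda,\square_k)$, using that, by the ellipticity of $\square_k$, its elements are smooth so pointwise evaluation is well defined. I do not anticipate any real obstacle: the argument only uses Cauchy--Schwarz in the fiber, Parseval in the finite-dimensional coefficient Hilbert space, and the basis-independence of $\sum_j|s_j(x)|^2$. The only bookkeeping point is that $\{e^I_x\}$ is unitary at the single point $x$, so $|\alpha(x)|_h^2=\sum_I{}'|\alpha_I(x)|^2$ holds without additional factors.
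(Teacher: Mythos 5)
Your argument is correct and is essentially the standard reproducing-kernel argument of Berman's Lemma 2.1, which the paper cites without reproving: Cauchy--Schwarz against an orthonormal basis gives $S^q_k(x)\le B^q_k(x)$, and the Riesz/Parseval identification of each component evaluation functional gives the bound by $\sum_I{}'S^q_{k,I}(x)$, with the same reasoning applying verbatim to $\cE^q(\lambda,\square_k)$ since its elements are smooth by ellipticity. Your further observation that the right-hand bound is in fact an equality is also correct, so nothing is lost.
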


\subsection{The scaling technique}

In this subsection, we will recall the scaling technique in \cite{Be04}. Let $(X,\omega)$ be a Hermitian manifold of dimension $n$. Let $(L,h^L)$ be a holomorphic Hermitian line bundle on $X$. Fix $x\in X$, we can take a local complex coordinate $\{z_{i}\}$ around $x$ and a holomorphic trivializing section $s$ of $L$ such that\cite{We1}
\begin{equation*}
    \omega(z)= \frac{i}{2}\sum_{i,j}h_{ij}(z)dz_{i}\wedge\ov{dz_{j}},\quad h_{ij}(0)=\delta_{ij},
\end{equation*}
\begin{equation*}
    |s(z)|^{2}=e^{-\phi(z)},\quad \phi(z)=\sum_{i=1}^{n}\lambda_{i,x}|z_{i}|^{2}+O(|z|^{3}).
\end{equation*}
It is not difficult to check that 
\begin{equation}\label{eqdet}
    \prod_{i=1}^{n}(\lambda_{i,x})dV_{X,\omega}(x)=\frac{1}{n!}(\frac{i}{2}\dbar\ddbar\phi)_x^n=det_{\omega}(\frac{i}{2}\dbar\ddbar\phi)_{x}dV_{X,\omega}(x).
\end{equation}

Let's define 
$$\phi_{0}(z):=\sum_{i=1}^{n}\lambda_{i,x}|z_{i}|^{2}.
$$

We will introduce some scaling notation. Let $B_{R}:=\{z:|z|<R\}$ in $\mathbb{C}^n$ and let $R_k:=\frac{\log k}{\sqrt{k}}$. Under the local coordinate around $x$, $B_R$ is identified with a subset of $X$ for some $R<<1$. Given a function $f$ on the ball $B_{R_{k}}$, we define the scaled function of $f$ by
$$f^{(k)}: B_{\sqrt{k}R_{k}}=B_{\log k}\longrightarrow \mathbb{C},\quad z\mapsto f(\frac{z}{\sqrt{k}}).
$$
Differential forms are scaled by scaling the components. We can compute that scaling the fiber metric on $L^k$ gives 
\begin{equation}\label{exphi}
    (k\phi)^{(k)}(z)=\sum_{i=1}^n \lambda_{i} |z_{i}|^2 + \frac{1}{\sqrt{k}}O(|z|^3).
\end{equation}
The radius $R_{k}:=\frac{\log k}{\sqrt{k}}$ has been chosen to make sure that the fiber metric on $L^k$ tends to the model fiber metric $\phi_0$ with all derivatives on scaled balls
\begin{equation}\label{scaltomod}
    \sup_{|z|\leq\sqrt{k}R_k}\left|\partial^\alpha((k\phi)^{(k)}-\phi_0)(z)\right|\to0,
\end{equation}
since (\ref{exphi}) and for any $n\in \mathbb{N}$,
$$ \sup_{|z|\leq\sqrt{k}R_k}\frac{1}{\sqrt{k}}O(|z|^n)\leq C \frac{(\log k)^n}{\sqrt{k}}\to 0.$$
Moreover, $\sqrt{k}R_{k}=\log k$ tends to infinity, so that the sequence of scaled balls $B_{\sqrt{k}R_{k}}$ exhausts $\mathbb{C}^n$. Let's denote by $\square^{(k)}$ the Laplacian, taken with respect to the scaled fiber metric $(k\phi)^{(k)}$ and the scaled base metric $\omega^{(k)}$. One can check that
\begin{equation}\label{scalap}
    \square^{(k)}\alpha^{(k)}=\frac{1}{k}(\square_{k}\alpha_{k})^{(k)},\quad \forall 
  \alpha_{k}\in \Dom(\square_{k})\subset L^{2}_{0,q}(X,L^{k}),
\end{equation}
where we use the notation $\alpha^{(k)}$ to replace $\alpha^{(k)}_k$ for simplifying the notation. Therefore, for any $\alpha_k \in \cH^{q}(X,L^{k})$, then the scaled form $\alpha{(k)}$ satisfies 
$$\square^{(k)}\alpha^{(k)}=0
$$
on the scaled ball $B_{\sqrt{k}R_{k}}$. Furthermore, by (\ref{scaltomod}), it's not hard to check that
\begin{equation}\label{exLa}
    \square^{(k)}=\square_{\phi_0}+\varepsilon_{k}\mathcal{D}_k,
\end{equation}
where $\square_{\phi_0}$ is the Laplacian with respect to the model metric $\phi_0$ and the scaled base metric $\omega^{(k)}$, $\mathcal{D}_k$ is a second order partial differential operator with bounded variable coefficients on the scaled ball $B_{\sqrt{k}R_{k}}$ and $\varepsilon_{k}$ is a sequence tending to zero. In fact, we can also check that all the derivatives of the coefficients of $\mathcal{D}_k$ are uniformly bounded. By changing of variables, one can also check that for any $\alpha_{k}\in  L^{2}_{0,q}(X,L^{k})$,
\begin{equation}\label{scanor}
    \|\alpha_{k}\|_{{B_{{R_{k}}}}}\sim k^{n}\left\|\alpha^{(k)}\right\|_{{\phi_{0},\sqrt{k}R_{k}}}.
\end{equation}

\subsection{The upper bound of $ B^q_{\leq\lambda}$}

In this subsection, we give a proof of {\cite[Proposition 5.1]{Be04}} for non-compact Hermitian manifolds ({Proposition \ref{upbd}}). In fact, we observe that the proof in {\cite[Proposition 5.1]{Be04}} is also valid in non-compact case and the methods used are similar to {\cite[Lemma 3.1, Theorem 3.2]{Be04}}. 

\begin{proposition}\label{upbd}
    Let $(X,\omega)$ be a Hermitian manifold of dimension $n$. Let $(L,h^L)$ be a holomorphic Hermitian line bundle on $X$. Assume that $\mu_k \to 0$, then for any $x\in X$, the following estimate holds
    \begin{equation*}
         B_{\leq\mu_kk}^{q}(x)\leq k^n1_{X(q)}\left|det_\omega(\frac {i}{2\pi}\partial\overline{\partial}\phi)_x\right|+o(k^n),
    \end{equation*} 
    where $X(q)$ is the subset of $X$ consisting of points on which the curvature of the holomorphic Hermitian line bundle $(L,h^L)$ has exactly $q$ negative eigenvalues and $n-q$ positive eigenvalues.
\end{proposition}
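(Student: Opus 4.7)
The plan is to carry out Berman's scaling argument from Section \ref{sec_Asy}, with the low-energy spectral condition $\alpha_k\in\cE^q(\mu_kk,\square_k)$ replacing exact harmonicity and forcing the rescaled limit to be annihilated by the model Laplacian $\square_{\phi_0}$. First, by Lemma \ref{lebs} we have $B^q_{\leq\mu_kk}(x)\leq\sum'_I S^q_{\leq\mu_kk,I}(x)$, and the number of summands depends only on $n$ and $q$, so it suffices to prove for every strictly increasing multi-index $I$ with $|I|=q$ the componentwise bound
\begin{equation*}
S^q_{\leq\mu_kk,I}(x)\leq k^n\mathbf{1}_{X(q)}\bigl|\det\nolimits_\omega\bigl(\tfrac{i}{2\pi}\dbar\ddbar\phi\bigr)_x\bigr|+o(k^n).
\end{equation*}

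Fix $x\in X$ and work in the coordinates and trivialization of Section \ref{sec_Asy}. For each $k$, pick a near-extremal $\alpha_k\in\cE^q(\mu_kk,\square_k)$ with $\|\alpha_k\|=1$ realizing the supremum defining $S^q_{\leq\mu_kk,I}(x)$ up to $o(1)$, and set $\beta_k:=\alpha_k^{(k)}$ on the scaled ball $B_{\sqrt{k}R_k}=B_{\log k}\subset\mathbb{C}^n$. Three ingredients enter. By (\ref{scanor}) the rescaled $L^2$ norms $\|\beta_k\|_{\phi_0,\sqrt{k}R_k}$ are uniformly bounded; by (\ref{scalap}) together with the spectral bound $\|\square_k\alpha_k\|\leq\mu_kk$ and (\ref{exLa}), we obtain $\|\square^{(k)}\beta_k\|_{\phi_0,\sqrt{k}R_k}=O(\mu_k)\to 0$; and a direct change of variables using (\ref{eqdet}) converts the quantity to be controlled into $k^n|(\beta_k)_I(0)|^2/\|\beta_k\|^2_{\phi_0}$ up to a bounded factor tracked by the determinant $|\det_\omega(\tfrac{i}{2\pi}\dbar\ddbar\phi)_x|$.

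The last step is to pass to the limit and identify it. Since $\square^{(k)}=\square_{\phi_0}+\varepsilon_k\mathcal{D}_k$ with coefficients and all derivatives bounded uniformly on every fixed compact set of $\mathbb{C}^n$, interior elliptic regularity applied to the approximate equation $\square^{(k)}\beta_k\to 0$ yields uniform $C^m_{\mathrm{loc}}$ bounds. By Arzel\`a--Ascoli and a diagonal extraction, one obtains a subsequence converging in $C^\infty_{\mathrm{loc}}(\mathbb{C}^n)$ to a limit $\beta_\infty\in L^2(\mathbb{C}^n,e^{-\phi_0})$ (by Fatou) with $\square_{\phi_0}\beta_\infty=0$. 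The kernel of $\square_{\phi_0}$ in bidegree $(0,q)$ under the quadratic weight $\phi_0=\sum_i\lambda_{i,x}|z_i|^2$ is non-zero precisely when $\phi_0$ has exactly $q$ negative eigenvalues, i.e. $x\in X(q)$, and in that case an explicit computation bounds $|(\beta_\infty)_I(0)|^2/\|\beta_\infty\|^2_{\phi_0}$ by $\pi^{-n}\prod_i|\lambda_{i,x}|$, which by (\ref{eqdet}) equals $|\det_\omega(\tfrac{i}{2\pi}\dbar\ddbar\phi)_x|$; if $x\notin X(q)$ then $\beta_\infty=0$, so $|(\beta_k)_I(0)|=o(1)$ directly. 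Undoing the scaling through $\|\alpha_k\|=1$ produces the claimed bound. The main obstacle is precisely this limiting argument: one must upgrade $L^2$-convergence of the residuals $\square^{(k)}\beta_k$ to $C^\infty_{\mathrm{loc}}$ compactness of $\beta_k$ itself despite the absence of a genuine eigenvalue equation, and one must ensure lower semicontinuity of the weighted $L^2$ norm under $C^\infty_{\mathrm{loc}}$ convergence on the exhausting scaled balls — both handled by standard interior G\aa rding estimates for $\square^{(k)}$ together with the Gaussian decay of $e^{-\phi_0}$.
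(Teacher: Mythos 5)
Your overall strategy --- rescale a near-extremal low-energy form, use G\aa rding estimates for $\square^{(k)}$ to get $C^\infty_{\mathrm{loc}}$ compactness, identify the limit as a $\square_{\phi_0}$-harmonic form in $L^2_{\phi_0}(\C^n)$, and evaluate via the model computation --- is exactly the route the paper takes (via Lemma \ref{uple} and Berman's Proposition 4.3). However, there is a genuine gap in your opening reduction. You claim it suffices to prove, for \emph{every} strictly increasing $I$ with $|I|=q$, the bound $S^q_{\leq\mu_kk,I}(x)\leq k^n\mathbf{1}_{X(q)}\left|\det_\omega(\tfrac{i}{2\pi}\dbar\ddbar\phi)_x\right|+o(k^n)$, and then to sum over $I$ using Lemma \ref{lebs}. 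That sum has $\binom{n}{q}$ terms, so this yields only $B^q_{\leq\mu_kk}(x)\leq\binom{n}{q}k^n\mathbf{1}_{X(q)}\left|\det_\omega(\tfrac{i}{2\pi}\dbar\ddbar\phi)_x\right|+o(k^n)$, which is strictly weaker than the proposition whenever $0<q<n$. Your later statement that the explicit model computation bounds $|(\beta_\infty)_I(0)|^2/\|\beta_\infty\|^2_{\phi_0}$ by $\pi^{-n}\prod_i|\lambda_{i,x}|$ for each $I$ has the same defect: it is true but cannot be summed without losing the constant.

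The missing ingredient is the refined model statement that, after ordering the coordinates so that the negative eigenvalues at $x\in X(q)$ are $\lambda_{1,x},\dots,\lambda_{q,x}$, one has $S^q_{I,x,\C^n}(0)=0$ for every $I\neq(1,2,\dots,q)$; i.e.\ every $L^2_{\phi_0}$-harmonic $(0,q)$-form on the model is proportional to $e^{\sum_{i\le q}\lambda_i|w_i|^2}\,d\ov{w_1}\wedge\cdots\wedge d\ov{w_q}$ at the origin, so all components of the limit $\beta_\infty$ other than the distinguished one vanish. With this, the sum in Lemma \ref{lebs} collapses to $0+\cdots+0+S^q_{x,\C^n}(0)=\mathbf{1}_{X(q)}(x)\left|\det_\omega(\tfrac{i}{2\pi}\dbar\ddbar\phi)_x\right|$, which is how the paper (following Berman) closes the argument. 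You need to either prove this vanishing of the off-diagonal components or bound $B^q_{\leq\mu_kk}$ directly by the full extremal function $S^q_{\leq\mu_kk}$ plus the $o(k^n)$ contributions of the remaining components, rather than by $\binom{n}{q}$ copies of the same leading term. The remaining analytic steps (the approximate eigenvalue equation $\|(\square^{(k)})^m\beta_k\|\to 0$, interior elliptic bootstrapping, Fatou for the weighted norm of the limit) match the paper's Lemma \ref{uple} and are fine.
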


We adjust the proof of {\cite[Lemma 3.1]{Be04}} and get the following Lemma \ref{uple}, which is necessary to prove Proposition \ref{upbd}. 

\begin{lemma}\label{uple}
    For each $k$, suppose that $\beta^{(k)}$ is a smooth $q$-form on the ball $B_{\sqrt{k}R_k}$ such that $\beta^{(k)}=k^{-\frac{1}{2}}\alpha^{(k)}$, where $\alpha_k\in \cE^q(k\mu_{k},\square_k)$ has a unit norm. Identify $\beta^{(k)}$ with a form in $L^{2}_{\phi_0}(\mathbb{C}^n )$ by extending with zero. Then there is constant $C$ independent of $k$ such that
    \begin{equation*}
        \sup_{z\in B_1}\left|\beta^{(k)}(z)\right|_{\phi_0}^2\leq C\left\|\beta^{(k)}\right\|_{\phi_{0,}B_2}^2.
    \end{equation*}
    Moreover, there is a subsequence of $\{\beta^{(k)}\}$ which converges uniformly with all derivatives on any ball in $\mathbb{C}^n$ to a smooth form $\beta$, where $\beta$ is in $L^{2}_{\phi_0}(\mathbb{C}^n )$.
\end{lemma}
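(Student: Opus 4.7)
The lemma is a routine interior elliptic regularity plus compactness argument applied to the scaled Laplacian $\square^{(k)}$. The key inputs are already in the excerpt: the scaled eigenvalue relation (\ref{scalap}), the smooth convergence $(k\phi)^{(k)}\to\phi_0$ of (\ref{scaltomod}), the structural decomposition (\ref{exLa}) $\square^{(k)}=\square_{\phi_0}+\varepsilon_k\mathcal{D}_k$ with $\varepsilon_k\to0$ and uniformly smoothly bounded coefficients, and the norm scaling (\ref{scanor}).

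First I would quantify the elliptic equation satisfied by $\beta^{(k)}$. Since $\alpha_k\in\cE^q(k\mu_k,\square_k)$ has unit norm, spectral theory gives $\|\square_k\alpha_k\|\le k\mu_k$. Combining this with (\ref{scalap}), (\ref{scanor}), and the prefactor $k^{-1/2}$ in the definition of $\beta^{(k)}$ yields $\square^{(k)}\beta^{(k)}=\gamma^{(k)}$ on $B_{\sqrt{k}R_k}$ with $\|\gamma^{(k)}\|_{\phi_0,B_{\sqrt{k}R_k}}=o(1)$, using $\mu_k\to0$. By (\ref{scaltomod}) and (\ref{exLa}), on every fixed ball $B_R\subset\mathbb{C}^n$ (and for $k$ so large that $B_R\Subset B_{\sqrt{k}R_k}$, which is eventually true since $\sqrt{k}R_k=\log k\to\infty$) the family $\{\square^{(k)}\}$ is uniformly elliptic with smooth coefficients bounded uniformly in $k$ in every $C^m$-norm, and likewise for the weight $(k\phi)^{(k)}$. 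Standard interior elliptic regularity applied on $B_1\subset B_2$ then gives, for sufficiently large $m$,
\begin{equation*}
\|\beta^{(k)}\|_{H^{m+2}(B_1)}\le C\bigl(\|\gamma^{(k)}\|_{H^m(B_2)}+\|\beta^{(k)}\|_{L^2_{\phi_0}(B_2)}\bigr),
\end{equation*}
with $C$ independent of $k$. Sobolev embedding converts the left-hand side into a supremum norm on $B_1$, and since $\|\gamma^{(k)}\|\to 0$ its contribution is absorbed for $k$ large, producing the first assertion.

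For the convergence statement I would apply the same estimate on each pair $B_R\subset B_{2R}$ at arbitrary Sobolev order, obtaining uniform $C^m$-bounds on $\beta^{(k)}$ over every fixed ball for every $m$. Arzela-Ascoli together with a Cantor diagonal extraction over $R=1,2,\ldots$ then yields a subsequence converging in $C^\infty_{\text{loc}}(\mathbb{C}^n)$ to a smooth form $\beta$, which automatically satisfies $\square_{\phi_0}\beta=0$ in the limit. Fatou's lemma applied to the a.e.\ pointwise limit $|\beta^{(k)}|^2_{\phi_0}\mathbf{1}_{B_{\sqrt{k}R_k}}\to|\beta|^2_{\phi_0}$ finally gives $\|\beta\|_{L^2_{\phi_0}(\mathbb{C}^n)}<\infty$. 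The only genuinely non-trivial point is the uniformity in $k$ of the elliptic constants; this is precisely what (\ref{scaltomod})--(\ref{exLa}) are designed to supply, and everything else is bookkeeping.
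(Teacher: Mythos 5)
Your overall route -- uniform interior elliptic estimates for the scaled Laplacian, Sobolev embedding for the sup bound, Rellich plus a diagonal extraction for $C^\infty_{\mathrm{loc}}$ convergence, and Fatou with the uniform bound $\sup_k\|\beta^{(k)}\|^2_{\phi_0}\le 1$ from (\ref{scanor}) for $\beta\in L^2_{\phi_0}(\mathbb{C}^n)$ -- is exactly the paper's.

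There is, however, one concrete gap in the high-order step. The interior estimate you invoke,
\begin{equation*}
\|\beta^{(k)}\|_{H^{m+2}(B_1)}\le C\bigl(\|\gamma^{(k)}\|_{H^m(B_2)}+\|\beta^{(k)}\|_{L^2_{\phi_0}(B_2)}\bigr),
\end{equation*}
requires control of $\gamma^{(k)}=\square^{(k)}\beta^{(k)}$ in $H^m(B_2)$, but the spectral bound $\|\square_k\alpha_k\|\le k\mu_k\|\alpha_k\|$ only gives you $\|\gamma^{(k)}\|_{L^2}=o(1)$; you never establish the $H^m$ bound, and trying to get it from elliptic regularity for $\beta^{(k)}$ is circular. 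The paper avoids this by applying G\r{a}rding's inequality to the \emph{iterated} operator $(\square^{(k)})^m$, so that the right-hand side involves only the $L^2$ norm of $(\square^{(k)})^m\beta^{(k)}$, which after undoing the scaling is bounded by $k^{-2m}\|(\square_k)^m\alpha_k\|_X^2\le\mu_k^{2m}\to 0$ directly from the spectral theorem (since $\alpha_k\in\cE^q(k\mu_k,\square_k)$ has unit norm). With that substitution -- powers of the operator in place of a single application at high Sobolev order -- your argument closes; everything else in your plan matches the paper's proof.
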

\begin{proof}
    Fix a ball $B_R$ in $\mathbb{C}^n$. By G\r{a}rding's inequality for the elliptic operator $(\square^{(k)})^m$, we have the following estimates for the Sobolev norm of $\beta^{(k)}$ on the ball $B_R$ with $2m$ derivatives
    \begin{equation*}
        \left\|\beta^{(k)}\right\|_{\phi_{0},B_{R},2m}^{2}\leq C_{R,k}\left(\left\|\beta^{(k)}\right\|_{{\phi_{0},B_{2R}}}^{2}+\left\|(\square^{(k)})^{m}\beta^{(k)}\right\|_{\phi_0, {B_{2R}}}^{2}\right),
    \end{equation*}
    for any positive integers $m$. 
Since $\square^{(k)}$ converges to $\square_{\phi_0}$ on $B_{2R}$, we find that $C_{R,k}$ is independent of $k$, therefore, for any $m\in \mathbb{N}$,
\begin{equation}\label{Gabe}
     \left\|\beta^{(k)}\right\|_{\phi_{0},B_{R},2m}^{2}\leq C_{R}\left(\left\|\beta^{(k)}\right\|_{{\phi_{0},B_{2R}}}^{2}+\left\|(\square^{(k)})^{m}\beta^{(k)}\right\|_{\phi_0, {B_{2R}}}^{2}\right).
     \end{equation}
By changing of variables, we get
    \begin{equation*}
        \left\|(\square^{(k)})^{m}\beta^{(k)}\right\|_{\phi_{0},B_{2R}}^{2}\leq k^{-n}\left\|(\square^{(k)})^{m}\alpha^{(k)}\right\|_{{\phi_{0},B_{{\sqrt{k}R_{k}}}}}^{2}\lesssim k^{-2m}\left\|(\square_k)^{m}\alpha_{k}\right\|_{X}^{2}.
    \end{equation*}
    Since $\alpha_k\in \cE^q(k\mu_{k},\square_k)$ and $\alpha_k$ have unit norms, we have 
    \begin{equation*}
        k^{-2m}\left\|(\square_k)^{m}\alpha_{k}\right\|_{X}^{2}\leq \mu_{k}^m\left\|\alpha_{k}\right\|_{X}^{2}=\mu_k\to 0.
    \end{equation*}
Hence we find the last term $\left\|(\square^{(k)})^{m}\beta^{(k)}\right\|_{\phi_{0},B_{2R}}^{2}\to 0$ in (\ref{Gabe}). Therefore, we get 
\begin{equation}\label{Gabe2}
    \left\|\beta^{(k)}\right\|_{\phi_{0},B_{R},2m}^{2}\leq C_{R}\left\|\beta^{(k)}\right\|_{{\phi_{0},B_{2R}}}^{2},
\end{equation}
in particular,
$\left\|\beta^{(k)}\right\|_{\phi_{0},B_1,2m}^2\leq C\left\|\beta^{(k)}\right\|_{\phi_{0},B_2}^2.$
The continuous injection $L^{2,k}\hookrightarrow C^0, k>n$, provided by the Sobolev embedding theorem, proves the first statement in the lemma. Next, we will prove the second statement, since $\alpha_k$ have unit norms and (\ref{scanor}), we get
 \begin{equation*}
     \sup_k\left\|\beta^{(k)}\right\|_{\phi_0}^2=\sup_kk^{-n}\left\|\alpha^{(k)}\right\|_{\phi_0,B_{\sqrt{k}R_k}}^2\leq\sup_k\|\alpha_k\|_X^2=1.
 \end{equation*}
 By (\ref{Gabe2}), we deduce that for any $R>0$,
 \begin{equation*}
      \left\|\beta^{(k)}\right\|_{\phi_{0},B_{R},2m}^{2}\leq C_{R}.
 \end{equation*}
 Since this holds for any $m\geq1$, Rellich's compactness theorem yields, for each $R$, a subsequence of $\{\beta^{(k)}\}$ , which converges in all Sobolev spaces $\boldsymbol{H}^{k}(B_R)$  for $k \geq 0$. The compact embedding $\boldsymbol{H}^{k}\hookrightarrow C^l, k>n+\frac12l$, shows that the sequence converges in all $C^{l}(B_R)$. Choosing a diagonal sequence, with respect to a sequence
of balls exhausting $\mathbb{C}^n$, completes the proof of the lemma.
\end{proof}

Now we can give a proof of Proposition \ref{upbd}.
\begin{proof}[Proof of Proposition \ref{upbd}]
    First we will prove that 
    \begin{equation*}
        \limsup_{k\to\infty}k^{-n}S_{\leq\mu_kk}^{q}(x)\leq 1_{X(q)}\left|det_\omega(\frac {i}{2\pi}\partial\overline{\partial}\phi)_x\right|.
    \end{equation*}
    By definition, there is a sequence $\alpha_k\in \cE^q(k\mu_{k},\square_k)$ of unit norm such that 
    \begin{equation*}
         \limsup_{k\to\infty}k^{-n}S_{\leq\mu_kk}^{q}(x)=\limsup_{k\to\infty}k^{-n}|\alpha_k(x)|^2 .
    \end{equation*}
    Let's consider the $\beta^{(k)}:=k^{-\frac{1}{2}}\alpha^{(k)}$. By definition, $\beta^{(k)}$ is a form on $B_{\sqrt{k}R_k}$ and identify $\beta^{(k)}$ with a form in $L^{2}_{\phi_0}(\mathbb{C}^n )$ by extending with zero. Using Lemma \ref{uple}, we can find a subsequence of $\{\beta^{k_{j}}\}$ that converges uniformly with all derivatives to $\beta$ on any ball in $\mathbb{C}^n$, where $\beta$ is smooth and $||\beta||_{\phi_0}^2\leq1$. Since $\alpha_k\in \cE^q(k\mu_{k},\square_k)$, we get $\beta^{(k)}\in\cE^q(\mu_{k},\square^{(k)})$, moreover, from (\ref{exLa}), we get $\square_{\phi_0}\beta=0$. Hence, we get
    \begin{equation*}
        \lim_k\sup k^{-n}S^{q}_{\leq\mu_kk}(x)=\lim_j\left|\beta^{(k_j)}(0)\right|^2=|\beta(0)|^2\leq\frac{|\beta(0)|^2}{\|\beta\|_{\phi_0}^2}\leq S_{x,\mathbb{C}^n}^q(0),
    \end{equation*}
    Berman {\cite[Proposition 4.3]{Be04}} computed explicitly $S_{x,\mathbb{C}^n}^q(0)$, $B_{x,\mathbb{C}^n}^q(0)$ on model case $\mathbb{C}^n$
    \begin{equation*}
        S_{x,\mathbb{C}^n}^q(0)=B_{x,\mathbb{C}^n}^q(0)=1_{X(q)}(x)\left|det_\omega(\frac i{2\pi}\partial\overline{\partial}\phi)_x\right|.
    \end{equation*}
    The first step of this proof is done. By Lemma \ref{lebs}, we also get $\limsup_{k\to\infty}k^{-n}B_{\leq\mu_kk}^{q}(x)=0$, for any $x\notin X(q)$. Next, let's consider $x\in X(q)$, we can assume that $\lambda_1 \cdots \lambda_q$ are negative eigenvalues. Berman {\cite[Proposition 4.3]{Be04}} proved 
    \begin{equation*}
        S_{I,x,\mathbb{C}^n}^q(0)=0, \quad \forall I\ne (1,2,\dots,q).
    \end{equation*}
    Therefore, we have $\beta_I=0$ for any $I\ne (1,2,\dots,q)$. Moreover, for any $I\ne (1,2,\dots,q)$, we deduce that 
    \begin{equation*}
        \lim_j k_j^{-n}S_{\leq\lambda,I}^{q}(0)=\lim_j k_j^{-n}\left|\alpha_{k_j,I}(0)\right|^2=\left|\beta_I(0)\right|^2=0.
    \end{equation*}
    This proves that  
    \begin{equation*}
        \lim_k k^{-n}S_{\leq\lambda,I}^{q}(0)=0,\quad \forall I\ne (1,2,\dots,q).
    \end{equation*}
    Finally, by Lemma \ref{lebs},
    \begin{equation*}
        \lim\sup_kk^{-n}B_{\leq\mu_kk}^{q}(x)\leq0+0+...+S_{x,\mathbb{C}^n}^q(0)=1_{X(q)}(x)\left|det_\omega(\frac i{2\pi}\partial\overline{\partial}\phi)_x\right|
    \end{equation*}
    finishes the proof of the theorem.
\end{proof}

 \subsection{The lower bound of $ B^q_{\leq\lambda}$}
 In this subsection, we will prove the following lower bound of $ B^q_{\leq\lambda}$ on any compact subset of Hermitian manifolds (whether compact or not). 
\begin{proposition}\label{lobd}
    Let $(X,\omega)$ be a Hermitian manifold of dimension $n$. Let $(L,h^L)$ be a holomorphic Hermitian line bundle on $X$. Suppose $K$ is a compact subset of $X$. Then there is a sequence $\mu_k\to0$ such that 
    \begin{equation*}
        \liminf_{k\to\infty} k^{-n}B_{\leq\mu_kk}^{q}(x)\geq1_{K(q)}\left|det_\omega(\frac i{2\pi}\partial\overline{\partial}\phi)_x\right|,\qquad \forall x\in K.
    \end{equation*}
\end{proposition}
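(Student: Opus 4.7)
The plan is to construct an explicit approximate extremal sequence by transplanting the ground state of the model Laplacian $\square_{\phi_0}$ to $X$ and then projecting onto $\cE^q(\mu_k k,\square_k)$. Fix $x_0\in K$; if $x_0\notin K(q)$ the right-hand side vanishes and nothing is to prove, so assume $x_0\in K(q)$. In the local coordinates and trivialization around $x_0$ from Section \ref{sec_Asy}, the model computation of \cite[Proposition 4.3]{Be04} exhibits an explicit unit form $\beta_0\in L^2_{\phi_0}(\mathbb{C}^n)$ with $\square_{\phi_0}\beta_0=0$ and $|\beta_0(0)|^2=B^q_{x_0,\mathbb{C}^n}(0)=|\det_\omega(\tfrac{i}{2\pi}\dbar\ddbar\phi)_{x_0}|$; this $\beta_0$ is of Bargmann--Fock type with Gaussian decay in $|w|$, which will drive the later error estimates.

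First I would form the trial section $\alpha_k:=\chi(z)\beta_0(\sqrt{k}z)\,s^{\otimes k}$ on the chart, where $\chi$ is a smooth cutoff equal to $1$ on $B_{R_k/2}$ and supported in $B_{R_k}$ and $s$ is the local trivializing frame; extend by zero. Using (\ref{scalap})--(\ref{scanor}) and the expansion (\ref{exLa}), together with $\square_{\phi_0}\beta_0=0$, the bulk contribution to $\square_k\alpha_k$ becomes $k\varepsilon_k\mathcal{D}_k\beta_0$ in the scaled picture (of size $O(\varepsilon_k)$ in $L^2_{\phi_0}$), while the cutoff commutator is supported in the scaled annulus $\{|w|\sim\log k\}$, where $|\beta_0(w)|^2 e^{-\phi_0(w)}$ is super-polynomially small. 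Summing, $\|\square_k\alpha_k\|/\|\alpha_k\|\le k\delta_k$ for some $\delta_k\to 0$, and by direct scaling $\|\alpha_k\|^2$ is comparable to $k^n$ up to a positive constant.

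Next I would set $\mu_k:=\sqrt{\delta_k}\to 0$ and decompose $\alpha_k=\alpha_k'+\alpha_k''$ with $\alpha_k':=E^q_{\leq\mu_k k}(\square_k)\alpha_k\in\cE^q(\mu_k k,\square_k)$. The spectral theorem gives $\|\square_k\alpha_k''\|\ge\mu_k k\|\alpha_k''\|$, so $\|\alpha_k''\|/\|\alpha_k\|\le\delta_k/\mu_k\to 0$ and $\|\alpha_k'\|/\|\alpha_k\|\to 1$. By Lemma \ref{lebs},
\begin{equation*}
B^q_{\leq\mu_k k}(x_0)\;\ge\;\frac{|\alpha_k'(x_0)|^2}{\|\alpha_k'\|^2}.
\end{equation*}
Since $\alpha_k'$ lies in $\cE^q(\mu_k k,\square_k)$ with $\mu_k\to 0$, Lemma \ref{uple} applies to the scaled normalized sequence attached to $\alpha_k'/\|\alpha_k'\|$: a subsequence converges uniformly with all derivatives on every compact set to some $\square_{\phi_0}$-harmonic form $\gamma\in L^2_{\phi_0}(\mathbb{C}^n)$. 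The same scaling applied to $\alpha_k/\|\alpha_k\|$ converges to $\beta_0$ by construction, and the $L^2$-smallness $\|\alpha_k-\alpha_k'\|/\|\alpha_k\|\to 0$ forces $\gamma=\beta_0$. Passing to pointwise values at the origin, $k^{-n}|\alpha_k'(x_0)|^2/\|\alpha_k'\|^2\to|\beta_0(0)|^2=B^q_{x_0,\mathbb{C}^n}(0)$, which is the claimed lower bound.

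The main technical point is the interplay between the $L^2$ spectral projection and the pointwise evaluation at $x_0$. Spectral truncation provides no direct pointwise control on the residual $\alpha_k''$, so one cannot simply subtract; instead, the limit of the rescaled $\alpha_k'$ must be identified by uniqueness via Lemma \ref{uple}, whose hypothesis of low-energy membership is exactly what $\alpha_k'$ satisfies but $\alpha_k''$ does not. Calibrating $\mu_k=\sqrt{\delta_k}$ is the natural balance: slow enough that $\delta_k/\mu_k\to 0$ so the projection captures the bulk of $\alpha_k$, yet fast enough that $\mu_k\to 0$ and the resulting lower bound matches the upper bound in Proposition \ref{upbd}.
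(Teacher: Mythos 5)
Your construction and decomposition are exactly those of the paper's proof: the same cut-off Gaussian test form $\alpha_k$ built from the model ground state $\beta_0$, the same spectral splitting $\alpha_k=\alpha_k'+\alpha_k''$ with the calibration $\mu_k=\sqrt{\delta_k}$, and the same passage through the extremal function via Lemma \ref{lebs}. Where you diverge is the recovery of the pointwise value of $\alpha_k'$ at $x_0$. Your assertion that ``spectral truncation provides no direct pointwise control on the residual, so one cannot simply subtract'' is not accurate: since $E^q_{\leq\mu_k k}$ commutes with $\square_k$, one has $\|(\square_k)^m\alpha_k''\|\leq\|(\square_k)^m\alpha_k\|$ for every $m$, so the G\r{a}rding--Sobolev estimate underlying Lemma \ref{uple}, applied to the rescaled $\alpha_k''$, yields
\begin{equation*}
k^{-n}\left|(\alpha_k'')^{(k)}(0)\right|^2\;\lesssim\;\|\alpha_k''\|_X^2+\left\|k^{-m}(\square_k)^m\alpha_k\right\|_X^2\;\longrightarrow\;0,
\end{equation*}
and the paper does exactly this subtraction. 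Your alternative --- extract a $C^\infty_{\mathrm{loc}}$ limit $\gamma$ of the rescaled $\alpha_k'/\|\alpha_k'\|$ via Lemma \ref{uple} and identify $\gamma=\beta_0$ from the $L^2$ smallness of $\alpha_k''$ on scaled compact sets --- is also valid (with the routine sub-subsequence argument to upgrade subsequential convergence of the values at $0$ to convergence of the full sequence), but it proves nothing more and costs an extra compactness step. A cosmetic slip: with your normalization (no $k^{n/2}$ prefactor) one has $\|\alpha_k\|^2\sim k^{-n}$, not $k^n$; this is harmless since only the ratio $|\alpha_k'(x_0)|^2/\|\alpha_k'\|^2$ enters.

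There is one genuine gap. The proposition asserts a \emph{single} sequence $\mu_k\to0$ for which the lower bound holds at \emph{every} $x\in K$, and this single sequence is what Theorem \ref{esdeN} needs when integrating $B^q_{\leq\mu_k k}$ over $K$. In your argument the rate $\delta_k$ (hence $\mu_k=\sqrt{\delta_k}$) a priori depends on the chosen point $x_0$, through the eigenvalues $\lambda_{i,x_0}$ entering $\beta_0$ and through the error terms $\varepsilon_k$ of the scaling at $x_0$; fixing one point and arguing there does not produce a uniform $\mu_k$ over the (uncountable) set $K(q)$. The paper's Lemma \ref{lelb} closes this by noting that the eigenvalues of $\dbar\ddbar\phi$ are uniformly bounded on the compact set $K$, so that $\|\beta_0\|^2+\sum_i\|\partial_i\beta_0\|^2$ is uniformly controlled and $\varepsilon_k$ can be taken independent of $x$, giving a $\delta_k\to0$ valid for all $x\in K(q)$ simultaneously. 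You need to add this uniformity statement to make the proposition, as stated, follow from your construction.
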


The following Lemma is needed when we prove Proposition \ref{lobd}. The proof comes from modifying {\cite[Lemma 5.2]{Be04}}. Then we extend the results to any compact subsets of non-compact Hermitian manifolds. 

\begin{lemma}\label{lelb}
    For any $x\in X(q)$, there is a sequence $\{\alpha_k\}\subset \Omega^{0,q}_0(X,L^k)$ such that
    \begin{equation*}
        \begin{split}
            &|\alpha_k(x)|^2=k^n\left|det_\omega(\frac i{2\pi}\partial\overline{\partial}\phi)_x\right|,\\
            &\lim_k\|\alpha_k\|_X^2=1,\\
            &\lim_k\|k^{-m}(\square_k)^m \alpha_k\|^2_X=0,\quad \forall m\in\mathbb{N}.
        \end{split}
    \end{equation*}
    Moreover, for any compact set $K$ of $X$, there is a sequence $\delta_k\to0$, such that 
    \begin{equation*}
        \left( k^{-1}\square_k\alpha_k,\alpha_k\right)_X\leq\delta_k,\quad \forall x\in K(q), \ k\in \mathbb{N}.
    \end{equation*}
\end{lemma}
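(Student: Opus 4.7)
The plan is to adapt Berman's concentration construction to the non-compact setting by unscaling an explicit extremal form on the model space $\mathbb{C}^n$ and then cutting off. Fix $x\in X(q)$ and choose normal holomorphic coordinates and a trivialization around $x$ as in Sec.~3.2, so that $\phi(z)=\sum_{i=1}^n\lambda_{i,x}|z_i|^2+O(|z|^3)$ with $\lambda_{1,x},\ldots,\lambda_{q,x}<0$ and $\lambda_{q+1,x},\ldots,\lambda_{n,x}>0$. On $(\mathbb{C}^n,\phi_0)$ Berman constructs an explicit harmonic $(0,q)$-form $\beta$ of the form (a constant multiple of) $e^{-\sum \mu_i|z_i|^2}\,d\bar z_1\wedge\cdots\wedge d\bar z_q$ with $\|\beta\|_{\phi_0}=1$ and $|\beta(0)|^2=\bigl|\det_\omega(\tfrac{i}{2\pi}\partial\overline\partial\phi)_x\bigr|$, as already used in the upper-bound proof. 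Pick a smooth cutoff $\chi$ with $\chi\equiv 1$ on $B_{R_k/2}$ and $\chi\equiv 0$ outside $B_{R_k}$, and define
\begin{equation*}
\alpha_k(z):=k^{n/2}\,\chi(z)\,\beta\bigl(\sqrt{k}\,z\bigr),
\end{equation*}
viewed as an element of $\Omega^{0,q}_0(X,L^k)$ via the trivialization. The scaled form $\alpha_k^{(k)}$ on $B_{\sqrt{k}R_k}$ is then essentially $k^{n/2}\beta$ cut off at radius $\sqrt{k}R_k=\log k$.

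Property (a), $|\alpha_k(x)|^2=k^n\bigl|\det_\omega(\tfrac{i}{2\pi}\partial\overline\partial\phi)_x\bigr|$, is immediate because $\chi(0)=1$ and $|\beta(0)|^2$ gives the stated determinant. Property (b) follows from the scaling identity \eqref{scanor} together with \eqref{scaltomod}: $\|\alpha_k\|_X^2$ is, up to a factor $1+o(1)$ coming from $\det h_{ij}$, equal to $\|\chi^{(k)}\beta\|^2_{\phi_0^{(k)},B_{\sqrt{k}R_k}}$, and this converges to $\|\beta\|^2_{\phi_0}=1$ because $\phi_0^{(k)}\to\phi_0$ smoothly on bounded sets and the Gaussian tail of $\beta$ outside $B_{\log k/2}$ is $O(e^{-c\log^2 k})$. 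Property (c) is the crucial one. Since $\square_{\phi_0}\beta=0$ and $\square^{(k)}=\square_{\phi_0}+\varepsilon_k\mathcal{D}_k$ by \eqref{exLa}, one has
\begin{equation*}
\square^{(k)}\alpha_k^{(k)}=\varepsilon_k\,\mathcal{D}_k\bigl(\chi^{(k)}\beta\bigr)+\bigl[\square_{\phi_0},\chi^{(k)}\bigr]\beta,
\end{equation*}
where the first term is $o(1)$ in $L^2_{\phi_0}$ thanks to the uniform bounds on the coefficients and their derivatives of $\mathcal{D}_k$, while the commutator term is supported in $B_{R_k}\setminus B_{R_k/2}$ (scaled: between $\log k/2$ and $\log k$) and is exponentially small because of the Gaussian decay of $\beta$. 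Iterating this argument, using that the coefficients of $\mathcal{D}_k$ are uniformly $C^\infty$-bounded, gives $\|(\square^{(k)})^m\alpha_k^{(k)}\|^2_{\phi_0^{(k)}}\to 0$ for each fixed $m$, and unscaling via \eqref{scalap}--\eqref{scanor} yields $\|k^{-m}\square_k^m\alpha_k\|_X^2\to 0$. Taking $m=1$ and pairing with $\alpha_k$ gives the quadratic-form bound in the last assertion at a fixed $x$.

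Finally, to obtain a single sequence $\delta_k\to 0$ valid uniformly for $x\in K(q)$, observe that on $K(q)$ the eigenvalues $\lambda_{i,x}$ and the eigen-directions of $R^L_x$ vary continuously (their signs are locked), so normal coordinates, the trivialization, the cutoff $\chi$, and the model form $\beta_x$ can be chosen to depend continuously on $x\in K(q)$; the error constants $\varepsilon_k$ and the bounds on $\mathcal{D}_k$ in \eqref{exLa} are locally uniform, hence uniform on the compact set $K$. The main obstacle I anticipate is precisely this uniformity: one must check that the rate at which $\square^{(k)}\to\square_{\phi_0}$ (and the Gaussian decay constants of $\beta_x$, which depend on $\lambda_{i,x}$) can be controlled uniformly in $x\in K(q)$. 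Since $K(q)$ may fail to be open in $K$, one must argue on $K(q)$ itself where the signature is constant and use continuity of the curvature eigenvalues; compactness then upgrades pointwise decay to a single $\delta_k\to 0$.
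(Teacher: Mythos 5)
Your construction is essentially identical to the paper's: the same normalized Gaussian model form $\beta$ on $(\C^n,\phi_0)$, the same unscaling-plus-cutoff definition $\alpha_k=k^{n/2}\chi_k\beta(\sqrt{k}\,\cdot)$, the same norm computation via \eqref{scanor}, and the same use of $\square^{(k)}=\square_{\phi_0}+\varepsilon_k\mathcal{D}_k$ with the commutator term supported in the annulus to kill $\|k^{-m}\square_k^m\alpha_k\|$. The uniformity over $K(q)$ in the last assertion is handled in the paper by exactly the same appeal to boundedness of the eigenvalues and $x$-independence of $\varepsilon_k$ that you describe (and the subtlety you flag about $K(q)$ not being closed is present in the paper's argument as well), so the proposal matches the paper's proof.
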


\begin{proof}
    Without loss of generality, we assume that the first $q$ eigenvalues at $x$, $\lambda_{1,x},\dots\lambda_{q,x}$ are negative, while the remaining eigenvalues are positive. Define the following form in $\mathbb{C}^n$:
    \begin{equation*}
    \beta(w)=\left(\frac{|\lambda_1||\lambda_2|\cdotp\cdotp\cdotp|\lambda_n|}{\pi^n}\right)^{\frac12}e^{+\sum_{i=1}^q\lambda_i|w_i|^2}d\overline{w_1}\wedge d\overline{w_2}\wedge...\wedge d\overline{w_q}.
    \end{equation*}
    Observe that $|\beta|_{\phi_{0}}^{2}=\frac{|\lambda_{1}||\lambda_{2}|\cdots|\lambda_{n}|}{\pi^{n}}e^{-\sum_{i=1}^{n}|\lambda_{i}||w_{i}|^{2}}$ and $\|\beta\|_{\phi_0,\mathbb{C}^n}=1$. It is not hard to check that $\beta\in\boldsymbol{H}^{m}_{\phi_0}(\mathbb{C}^n)$ for any $m\in\mathbb{N}$. Define $\alpha_k$ on $X$ by 
    \begin{equation*}
        \alpha_k(z):=k^{\frac n2}\chi_k(\sqrt{k}z)\beta(\sqrt{k}z),
    \end{equation*}
    where $\chi_k(w)=\chi(\frac{w}{\sqrt{k}R_k})$ and $\chi$ is a smooth function supported on the unit ball, which equals one on the ball of radius $\frac{1}{2}$. By (\ref{eqdet}), it is easy to find that 
    \begin{equation*}
        |\alpha_k(x)|^2=k^n\left|det_\omega(\frac i{2\pi}\partial\overline{\partial}\phi)_x\right|.
    \end{equation*}
   To compute $\|\alpha_k\|^2$, note that 
   \begin{equation*} \|\alpha_k\|_X^2=\|\chi_k\beta\|_{\phi_0,\mathbb{C}^n}^2=\|\beta\|_{\phi_0,\frac12\sqrt{k}R_k}^2+\|\chi_k\beta\|_{\phi_0,\geq\frac12\sqrt{k}R_k}^2.
   \end{equation*}
   Since $\beta\in L^2_{\phi_0}(\mathbb{C}^n)$, we get the last term $\|\chi_k\beta\|_{\phi_0,\geq\frac12\sqrt{k}R_k}^2\to 0$. And by $\sqrt{k}R_k$ tends to infinity, the first term $\|\beta\|_{\phi_0,\frac12\sqrt{k}R_k}^2\to \|\beta\|_{\phi_0,\mathbb{C}^n}=1$. Then combining above, we get 
   \begin{equation*}
       \lim_k\|\alpha_k\|^2=1.
   \end{equation*}
   From (\ref{scalap}) and (\ref{exLa}), we get
   \begin{equation}\label{ex35}
       \begin{split}
          \left\|k^{-m}(\square_k)^m\alpha_k\right\|_X^2
          \lesssim& k^{-n}\left\|(\square^{(k)})^m\alpha^{(k)}\right\|_{\phi_0,\sqrt{k}R_k}^2
          =k^{-n}\left\|(\square^{(k)})^{m}k^{\frac{n}{2}}(\chi_k\beta))\right\|_{\phi_0,\sqrt{k}R_k}^2\\
          =&\left\|(\square^{(k)})^{m}(\chi_k\beta))\right\|_{\phi_0,\sqrt{k}R_k}^2
          =\left\|(\square^{(k)})^{m-1}(\square_{\phi_0}+\varepsilon_k\mathcal{D}_k)(\chi_k\beta))\right\|_{\phi_0,\sqrt{k}R_k}^2\\
          \leq &\left\|(\square^{(k)})^{m-1}\square_{\phi_0}(\chi_k\beta))\right\|_{\phi_0,\sqrt{k}R_k}^2+\varepsilon_k^2 \left\|(\square^{(k)})^{m-1}\mathcal{D}_k(\chi_k\beta))\right\|_{\phi_0,\sqrt{k}R_k}^2,
       \end{split}
   \end{equation}
   where $\mathcal{D}_k$ is a second order partial differential operator, whose coefficients have derivatives that are uniformly bounded in $k$ and $\varepsilon_k\to 0$. Note that the first term $\left\|(\square^{(k)})^{m-1}\square_{\phi_0}(\chi_k\beta))\right\|_{\phi_0,\sqrt{k}R_k}^2$ tends to $0$ as $k\to \infty$. Indeed, it is easy to check $\square_{\phi_0}\beta=0$, hence $\ddbar\beta=\ddbar^{*,\phi_0}\beta=0$. By Leibniz' rule
   \begin{equation*}
       \square_{\phi_{0}}(\chi_{k}\beta)=\eta_{k}\beta,
   \end{equation*}
   where $\eta_k$ is a function, uniformly bounded in $k$ and contains second derivatives of $\chi_k$. It is not hard to find $\supp\eta_k\subset B_{\sqrt{k}R_k} \setminus B_{\frac{1}{2}\sqrt{k}R_k}$. Using (\ref{exLa}) repeatedly, we find 
   \begin{equation}\label{ft35}
       \left\|(\square^{(k)})^{m-1}\square_{\phi_0}(\chi_k\beta))\right\|_{\phi_0,\sqrt{k}R_k}^2\lesssim\left\|\psi_k P\beta\right\|_{\phi_0,\mathbb{C}^n}^2, 
   \end{equation}
   where $\psi_k$ is a function and $\supp\psi_k\subset B_{\sqrt{k}R_k} \setminus B_{\frac{1}{2}\sqrt{k}R_k}$, $P$ is a polynomial. Indeed, coefficients of $\mathcal{D}_k$ and its derivatives are uniformly bounded in $k$ and if we take any derivatives to $\beta$, it should be polynomials multiply with $\beta$. Since $\supp\psi_k\subset B_{\sqrt{k}R_k} \setminus B_{\frac{1}{2}\sqrt{k}R_k}$ and $P\beta\in  L^2_{\phi_0}(\mathbb{C}^n)$, we get $\left\|\psi_k P\beta\right\|_{\phi_0,\mathbb{C}^n}^2\to0$, thus 
   \begin{equation*}
       \left\|(\square^{(k)})^{m-1}\square_{\phi_0}(\chi_k\beta))\right\|_{\phi_0,\sqrt{k}R_k}^2\longrightarrow0.
   \end{equation*}
   Use the same method as (\ref{ft35}), we get a estimate of last term of (\ref{ex35})
   \begin{equation*}
        \left\|(\square^{(k)})^{m-1}\mathcal{D}_k(\chi_k\beta))\right\|_{\phi_0,\sqrt{k}R_k}^2\lesssim\left\| Q\beta\right\|_{\phi_0,\mathbb{C}^n}^2,
   \end{equation*}
   where $Q$ is a polynomial. Since $Q\beta\in  L^2_{\phi_0}(\mathbb{C}^n)$, we get $\left\|(\square^{(k)})^{m-1}\mathcal{D}_k(\chi_k\beta))\right\|_{\phi_0,\sqrt{k}R_k}^2$ is uniformly bounded. Combining (\ref{ex35}),(\ref{ft35}), we get
   \begin{equation*}
       \lim_k\|k^{-m}(\square_k)^m \alpha_k\|^2_X=0,\quad \forall m\in\mathbb{N}.
   \end{equation*}
   To prove the last statement, observe that
   \begin{equation*}
   \begin{split}
      \left( k^{-1}\square_k \alpha_k,\alpha_k\right)_X
      =&\left\|\frac1{\sqrt{k}}(\overline{\partial}+\overline{\partial}^*)\alpha_k\right\|_X^2
      \sim\left\|(\overline{\partial}+\overline{\partial}^{*(k)})(\chi_{\sqrt{k}R_k}\beta)\right\|_{\sqrt{k}R_k}^2\\
      \lesssim&\left\|(\chi_{\sqrt{k}R_k}(\overline{\partial}+\overline{\partial}^{*(k)})\beta\right\|_{\sqrt{k}R_k}^2+\|\beta\|_{ (B_{\sqrt{k}R_k} \setminus B_{\frac{1}{2}\sqrt{k}R_k})}^2\\
      \lesssim &\varepsilon_k\left(\|\beta\|^2+\sum_{i=1}^{2n}\|\partial_i\beta\|^2\right)+\|\beta\|_{ (B_{\sqrt{k}R_k} \setminus B_{\frac{1}{2}\sqrt{k}R_k})}^2,
   \end{split}    
   \end{equation*}
   where $\varepsilon_k\to0$, the third inequality comes from Leibniz' rule and the last inequality holds since there is an expansion for the first order operator $(\overline{\partial}+\overline{\partial}^{*,(k)})$ as in (\ref{exLa}) and $(\overline{\partial}+\overline{\partial}^{*,\phi_0})\beta=0$. Observe that $\dbar_i\beta$ depends on the eigenvalues of $\dbar\ddbar\phi$ at $x$, hence it depends on $x$. But for any compact set $K$ of $X$, there is an upper bound of absolute value of eigenvalues of $\dbar\ddbar\phi$ on $K$, we also have $\|\beta\|^2=1$, therefore $\left(\|\beta\|^2+\sum_{i=1}^{2n}\|\partial_i\beta\|^2\right)<1+C$. By the same argument, we can deduce $\varepsilon_k$ is independent of $x$. Since $\|\beta\|_{ (B_{\sqrt{k}R_k} \setminus B_{\frac{1}{2}\sqrt{k}R_k})}^2\to0$, we can find $\delta_k\to0$, such that for any $k$,
   \begin{equation*}
       \left( k^{-1}\square_k \alpha_k,\alpha_k\right)_X\leq\delta_k,\quad \forall x\in K(q), \ k\in \mathbb{N}.
   \end{equation*}
   The proof of this lemma is completed.
\end{proof}

Now we can prove Proposition \ref{lobd}.
\begin{proof}[Proof of Proposition \ref{lobd}]
    Take $\mu_k\to0$, such that $\frac{\delta_k}{\mu_k}\to0$, where $\delta_k$ is the sequence in Lemma \ref{lelb}.
    Let $\{\alpha_k\}$ be the sequence that Lemma \ref{lelb} provides. Define
    \begin{equation*}
        \alpha_{1,k}:= E_{\leq\mu_kk}^q\alpha_k,\quad \alpha_{1,k}:=\alpha_k-\alpha_{1,k},
    \end{equation*}
    where $E_{\leq\mu_kk}^q$ is the the spectral projection of $\square_k$. By the definition, $\alpha_k\in\Omega^{0,q}_0(X,L^{k})\subset L^2(X,L^k)$. Hence $\alpha_{1,k}\in\cE^q(\mu_kk,\square_k)$.    By the elliptic property of $\square_k$, $\ \alpha_{1,k},\alpha_{2,k}\in \Omega^{0,q}(X,L^{k})\cap L^2(X,L^k)$. Moreover, we can check
    \begin{equation}\label{proj0}
        \left(E^q_{\leq\lambda}\alpha_{2,k},\alpha_{2,k}\right)_X=0, \quad\forall\lambda\leq\mu_kk. 
    \end{equation}
   
    First, we prove the claim
    \begin{equation}\label{cllb}
        \lim_kk^{-n}\left|\alpha_2^{(k)}(0)\right|^2=0.
    \end{equation}
    As in the proof of Lemma \ref{uple}, 
    \begin{equation}\label{exlb}
        k^{-n}\left|\alpha_2^{(k)}(0)\right|^2
        \leq C\left(k^{-n}\left\|\alpha_2^{(k)}\right\|_{\phi_0,B_1}^2+k^{-n}\left\|(\square^{(k)})^m\alpha_2^{(k)}\right\|_{\phi_0,B_1}^2\right).
    \end{equation}
    From (\ref{proj0}), we observe that
     \begin{equation*}  
     \begin{split}
          \left(\square_k\alpha_{2,k},\alpha_{2,k}\right)_X
        =&\left(\int_\mathbb{R}\lambda 
 dE^q_{\leq\lambda}\alpha_{2,k},\alpha_{2,k}\right)_X
 =\int_\mathbb{R}\lambda 
 d\left(E^q_{\leq\lambda}\alpha_{2,k},\alpha_{2,k}\right)_X  \\
 =&\int_{(\mu_kk,+\infty]}\lambda 
 d\left(E^q_{\leq\lambda}\alpha_{2,k},\alpha_{2,k}\right)_X
 \geq \mu_kk\int_ \mathbb{R}1
 d\left(E^q_{\leq\lambda}\alpha_{2,k},\alpha_{2,k}\right)_X\\
 =&\mu_kk\left(\int_ \mathbb{R}1
 dE^q_{\leq\lambda}\alpha_{2,k},\alpha_{2,k}\right)_X
 =\mu_kk\|\alpha_{2,k}\|_X^2.
     \end{split}   
    \end{equation*}
    Hence, by Lemma \ref{lelb}, we find the first term of (\ref{exlb})
    \begin{equation*}
        k^{-n}\left\|\alpha_2^{(k)}\right\|_{\phi_0,B_1}^2
        \lesssim\left\|\alpha_{2,k}\right\|_{X}^{2}
        \leq\frac{1}{\mu_{k}k}\left(\square_k\alpha_{2,k},\alpha_{2,k}\right)_{X}
        \leq\frac{1}{\mu_{k}}\left( k^{-1}\square_k\alpha_{k},\alpha_{k}\right)_{X}\leq\frac{\delta_{k}}{\mu_{k}}\to 0.
    \end{equation*}
    Lemma \ref{lelb} also tells us the second term of (\ref{exlb}) tends to zero,
    \begin{equation*}
        k^{-n}\left\|(\square^{(k)})^m\alpha_2^{(k)}\right\|_{\phi_0,B_1}^2\leq\left\|k^{-m}(\square_k)^m\alpha_{2,k}\right\|_X^2\leq\left\|k^{-m}(\square_k)^m\alpha_k\right\|_X^2\to0.
    \end{equation*}
    Now we finish the proof of claim (\ref{cllb}).

    Finally, combining Lemma \ref{lebs}, Lemma \ref{lelb} and claim (\ref{cllb}), for any $x\in K(q)$ 
    \begin{equation*}
    \begin{split}
        k^{-n}B_{\mu_kk}^{q}(x)
        \geq& k^{-n}S_{\mu_kk}^{q}(x)
        \geq k^{-n}\frac{\left|\alpha_{k,1}(0)\right|^2}{\left\|\alpha_{k,1}\right\|_X^2}\geq k^{-n}\left|\alpha_{1,k}(0)\right|^2\\
        =& k^{-n}\left|\alpha_k(0)-\alpha_{2,k}(0)\right|^2
        \longrightarrow k^{-n}\left|\alpha_k(0)\right|^2
        =\left|det_\omega(\frac i{2\pi}\partial\overline{\partial}\phi)_x\right|.
    \end{split}
    \end{equation*}
    Since $B_{\mu_kk}^{q}(x)=0$ for any $x\notin K(q)$, we finish the proof.
\end{proof}

At the end of this section, we give a local version of strong holomorphic Morse inequalities on any compact subsets of non-compact Hermitian manifolds. In fact, it can be directly deduced  from Proposition \ref{upbd} and Proposition \ref{lobd}.

\begin{theorem}\label{thul}
    Let $(X,\omega)$ be a Hermitian manifold of dimension $n$. Let $(L,h^L)$ be a holomorphic Hermitian line bundle on $X$. Suppose $K$ is a compact subset of $X$. Then there is a sequence $\mu_k\to0$ such that 
    \begin{equation*}
     \lim_{k\to\infty} k^{-n}B_{\leq\mu_kk}^{q}(x)=1_{K(q)}\left|det_\omega(\frac i{2\pi}\partial\overline{\partial}\phi)_x\right|,\qquad \forall x\in K.
    \end{equation*}
\end{theorem}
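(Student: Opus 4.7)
The plan is to invoke the two bounds developed in this section and sandwich $k^{-n}B^q_{\leq\mu_k k}(x)$ between matching limits for a suitable sequence $\mu_k\to 0$. The authors themselves flag this as a direct consequence of Propositions \ref{upbd} and \ref{lobd}, so the proof is essentially a reconciliation argument; there is almost no new analysis to do once the two propositions are in hand.

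First I would fix the sequence $\mu_k\to 0$ supplied by Proposition \ref{lobd}. This choice is the more delicate one: recall that in its proof $\mu_k$ must be selected so that $\delta_k/\mu_k\to 0$, where $\delta_k$ is the (compact-set-uniform) upper bound for $(k^{-1}\square_k\alpha_k,\alpha_k)_X$ in Lemma \ref{lelb}. With this $\mu_k$ in place, Proposition \ref{lobd} gives, uniformly for $x\in K$,
\begin{equation*}
\liminf_{k\to\infty} k^{-n}B_{\leq\mu_k k}^{q}(x)\geq 1_{K(q)}(x)\left|\det\nolimits_\omega\bigl(\tfrac{i}{2\pi}\partial\overline{\partial}\phi\bigr)_x\right|.
\end{equation*}

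Next, since the same sequence $\mu_k$ tends to zero, Proposition \ref{upbd} applies and yields, for every $x\in X$,
\begin{equation*}
\limsup_{k\to\infty} k^{-n}B_{\leq\mu_k k}^{q}(x)\leq 1_{X(q)}(x)\left|\det\nolimits_\omega\bigl(\tfrac{i}{2\pi}\partial\overline{\partial}\phi\bigr)_x\right|.
\end{equation*}
The small bookkeeping point is to identify $1_{K(q)}$ and $1_{X(q)}$ on the set $K$. By the definitions in the introduction, $K(q)=K\cap X(q)$, so for $x\in K$ one has $1_{K(q)}(x)=1_{X(q)}(x)$, and hence the two bounds coincide pointwise on $K$.

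Combining the two estimates, $\liminf\geq\limsup$ on $K$, so the limit exists and equals the asserted value. The only conceivable obstacle would be making sure that a single sequence $\mu_k$ works simultaneously for both propositions; this is automatic here because Proposition \ref{upbd} holds for \emph{any} sequence $\mu_k\to 0$, while Proposition \ref{lobd} produces one specific such sequence, so picking the latter and feeding it into the former is consistent. No further compactness or diagonalization argument is required, and the uniformity in $x\in K$ that is needed on the lower-bound side has already been built into Lemma \ref{lelb}.
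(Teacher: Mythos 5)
Your proposal is correct and matches the paper's own argument: the paper simply takes the explicit choice $\mu_k:=\sqrt{\delta_k}$ (which satisfies $\delta_k/\mu_k\to 0$, the condition needed in the proof of Proposition \ref{lobd}) and then cites Propositions \ref{upbd} and \ref{lobd} exactly as you do, using that the upper bound holds for any sequence tending to zero. No further comment is needed.
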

\begin{proof}
    Take $\mu_k:=\sqrt{\delta_k}$, where $\delta_k$ is the sequence in Lemma \ref{lelb}. Then the theorem comes from Proposition \ref{upbd} and Proposition \ref{lobd} directly.
\end{proof}

\section{Strong Holomorphic Morse inequalities}\label{sec_HMI}

In this section, we will prove $L^2$ strong holomorphic Morse inequalities on non-compact manifolds (Theorem \ref{prop_main}). The main tools we used are based on Section \ref{sec_Asy}. First, we will give an asymptotic property of dimension of lower energy form Bergman spaces $N^q(\mu_kk,\square_k)=\dim_{\mathbb{C}} \cE^q(\mu_kk,\square_k).$ The point of passing from the compact manifold to the non-compact is, under appropriate assumption, the norm of lower energy forms with values in $L^k$ decay to zero as $k\rightarrow\infty$ outside of a compact subset. As a consequence, the computation of $N^q(\mu_kk,\square_k)$ concentrates on a compact subset.

\begin{theorem}\label{esdeN}
    Let $(X,\omega)$ be a Hermitian manifold of dimension $n$ and let $(L,h^L)$ be a holomorphic Hermitian line bundle on $X$. Let $0\leq q\leq n$.
		Suppose there exist a compact subset $K\subset X$ and $C>0$ such that,
		for sufficiently large $k$, we have
		\begin{equation*}
		\left(1-\frac{C}{k}\right)||s||^2\leq \frac{C}{k}\left(||\ddbar_ks||^2+||\ddbar^{*}_{k}s||^2\right)+\int_{K} |s|^2 dV_X
		\end{equation*}
		for $s\in \Dom(\ddbar_k)\cap \Dom(\ddbar^{*}_{k})\cap L^2_{0,q}(X,L^k)$.
	Then  we have the estimate for the dimension of the $q$-th lower energy form Bergman spaces $N^q(\mu_kk,\square_k)$,
	\begin{equation*}
	\lim_{k\rightarrow \infty}n!k^{-n}N^q(\mu_kk,\square_k)= \int_{K(q)}(-1)^q c_1(L,h^L)^n.
	\end{equation*}
\end{theorem}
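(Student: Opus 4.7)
The plan is to write the dimension as an integral of the Bergman kernel for lower energy forms, $N^q(\mu_k k,\square_k)=\int_X B^q_{\leq\mu_k k}\,dV_X$, to exploit the optimal fundamental estimate \eqref{eq_ofe} to concentrate this integral on the compact set $K$, and then to compute the contribution from $K$ via the pointwise Bergman asymptotic supplied by Theorem \ref{thul}. The first ingredient is a concentration lemma: if $s\in\cE^q(\mu_k k,\square_k)$, then by the spectral theorem $\|\ddbar_k s\|^2+\|\ddbar^{*}_k s\|^2=(\square_k s,s)\le\mu_k k\,\|s\|^2$, and substituting this into \eqref{eq_ofe} gives
\[
\bigl(1-\tfrac{C}{k}-C\mu_k\bigr)\|s\|^2\le\int_K|s|^2\,dV_X.
\]
Since $\mu_k\to 0$, for $k$ large one obtains $\|s\|^2\le(1+\eta_k)\|s\|_K^2$ with $\eta_k\to 0$, uniformly over $s\in\cE^q(\mu_k k,\square_k)$.

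Applying this to an orthonormal basis $\{s_j\}$ of $\cE^q(\mu_k k,\square_k)$ and summing (which simultaneously shows $N^q(\mu_k k,\square_k)<\infty$) sandwiches the dimension:
\[
\int_K B^q_{\leq\mu_k k}(x)\,dV_X\le N^q(\mu_k k,\square_k)\le(1+\eta_k)\int_K B^q_{\leq\mu_k k}(x)\,dV_X.
\]
Theorem \ref{thul} gives pointwise convergence $k^{-n}B^q_{\leq\mu_k k}(x)\to 1_{K(q)}(x)\,\bigl|\det_\omega(\tfrac{i}{2\pi}\partial\ddbar\phi)_x\bigr|$ on $K$, and a uniform dominating bound $k^{-n}B^q_{\leq\mu_k k}\le M$ on $K$ can be extracted by covering $K$ with finitely many coordinate charts and checking that the constants in the scaling argument of Lemma \ref{uple} depend continuously on the base point. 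Dominated convergence then yields
\[
\lim_{k\to\infty}k^{-n}\int_K B^q_{\leq\mu_k k}\,dV_X=\int_{K(q)}\bigl|\det_\omega(\tfrac{i}{2\pi}\partial\ddbar\phi)_x\bigr|\,dV_X.
\]

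It remains to rewrite the right-hand side in terms of $c_1(L,h^L)^n$. Using \eqref{eqdet} and the normal-coordinate expansion of $\phi$, one has $\det_\omega(\tfrac{i}{2\pi}\partial\ddbar\phi)_x\,dV_X=\pi^{-n}\prod_i\lambda_{i,x}\,dV_X$, while a direct expansion gives $c_1(L,h^L)^n=\pi^{-n}\,n!\prod_i\lambda_{i,x}\,dV_X$ at $x$; on $K(q)$ the product $\prod_i\lambda_{i,x}$ has sign $(-1)^q$, so $|\det_\omega(\tfrac{i}{2\pi}\partial\ddbar\phi)_x|\,dV_X=\tfrac{(-1)^q}{n!}c_1(L,h^L)^n$ there. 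Multiplying the previous limit by $n!$ and combining with the sandwich yields the asserted formula. The main technical obstacle is the uniform dominating bound needed for dominated convergence: Lemma \ref{uple} and Proposition \ref{upbd} are stated pointwise in $x$, so one must verify that the elliptic constants and the comparison between the scaled weight $(k\phi)^{(k)}$ and the model weight $\phi_0$ can be made uniform over $K$, which ultimately reduces to continuity of the local data and compactness of $K$.
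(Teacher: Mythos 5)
Your proposal is correct and follows essentially the same route as the paper: both derive the concentration estimate $\|s\|^2\le(1+o(1))\int_K|s|^2dV_X$ for $s\in\cE^q(\mu_kk,\square_k)$ from the optimal fundamental estimate via the spectral theorem, and then identify $N^q(\mu_kk,\square_k)$ with the integral of $B^q_{\le\mu_kk}$ over $K$ using the pointwise asymptotics of Theorem \ref{thul}. The only difference is packaging — you use a two-sided sandwich plus dominated convergence where the paper argues the $\limsup$ and $\liminf$ separately via Fatou-type inequalities — and the uniform dominating bound on $K$ that you flag is indeed the same point the paper implicitly relies on in its reverse-Fatou step.
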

Note that the condition in Theorem \ref{esdeN} is the $q$-th optimal fundamental estimate.
\begin{proof}
    First, by the spectral decomposition theorem, using the optimal fundamental estimate, for any $s\in \cE^q(\mu_kk,\square_{k})\subset \Dom(\square_{k})\cap L^2_{0,q}(X,L^k)$,
	\begin{equation*}
		\begin{split}
			(1-\frac{C}{k})||s||^2\leq& \frac{C}{k}(||\ddbar^{k}s||^2+||\ddbar^{*}_{k}s||^2)+\int_{K} |s|^2 dV_X\\
			=&\frac{C}{k}(\square^{k}s,s)+\int_{K} |s|^2 dV_X
			=\frac{C}{k}\left(\left(\int_{\mathbb{R}}\lambda dE_{\leq\lambda}^{q}\right)s,s\right)+\int_{K} |s|^2 dV_X\\
			=&\frac{C}{k}\int_{[0,\mu_kk]}\lambda d(E_{\leq\lambda}^{q}s,s)+\int_{K} |s|^2  dV_X
			\le \frac{C}{k} \mu_kk\int_{\mathbb{R}}1 d(E_{\leq\lambda}^{q}s,s)+\int_{K} |s|^2  dV_X\\
			=&C\mu_k\left(\left(\int_{\mathbb{R}}1dE_{\leq\lambda}^{q}\right)s,s\right)+\int_{K} |s|^2  dV_X
			=C\mu_k\|s\|^2+\int_{K} |s|^2 dV_X,	
		\end{split}		
	\end{equation*}
	where $E_{\leq\lambda}^{q}$ is the spectral measure of $\square_{k}$.
	Thus, it follows that $\|s\|^2\leq c_k\int_{K} |s|^2 dV_X$, where $c_k:=\frac{k}{k-\mu_kk-C}$ and $c_k \to 1$, as $k\to \infty$.
	By Fatou's lemma, H\"{o}lder's inequality and Theorem\ref{thul}, we get
	\begin{equation}\label{upbdN}
		\begin{split}
			&\limsup_{k\rightarrow \infty}\left(k^{-n}N^q(\mu_kk,\square_k)\right)\\
			\leq&\limsup_{k\rightarrow \infty}\left(k^{-n}c_k \int_{K}B_{\leq \mu_kk}^q(x)dV_X(x)\right)\\
			\leq&\left(\limsup_{k\rightarrow \infty} c_k\right)\left(\limsup_{k\rightarrow \infty} \int_{K}k^{-n}B_{\leq \mu_kk}^q(x)dV_X(x)\right)\\
			\leq&\int_{K}\lim_{k\rightarrow \infty}k^{-n} B_{\leq \mu_kk}^q(x)dV_X(x)
                =\int_{K}1_{K(q)}\left|det_\omega(\frac i{2\pi}\partial\overline{\partial}\phi)_x\right|dV_X(x)\\
			=& \int_{K(q)}(-1)^q \frac{c_1({L},h^{{L}})^n}{n!}.
		\end{split}
	\end{equation}
 Therefore, we get the upper bound of $k^{-n}N^q(\mu_kk,\square_k)$. Note that the idea of estimating the upper bound of $k^{-n}N^q$ comes from {\cite[Proposition 4.2]{LSW}}. Next, let's consider the lower bound of $k^{-n}N^q(\mu_kk,\square_k)$, by Fatou's lemma and Theorem \ref{thul},
 \begin{equation}\label{lobdN}
     \begin{split}
         &\liminf_{k\rightarrow \infty}\left(k^{-n}N^q(\mu_kk,\square_k)\right)
         =\liminf_{k\rightarrow \infty}\left( k^{-n} \int_{X}B_{\leq \mu_kk}^q(x)dV_X(x)\right)\\
         \geq &\liminf_{k\rightarrow \infty}\left( k^{-n} \int_{K}B_{\leq \mu_kk}^q(x)dV_X(x)\right)
         \geq k^{-n} \int_{K}\lim_{k\rightarrow \infty}B_{\leq \mu_kk}^q(x)dV_X(x)\\
         =&\int_{K}1_{K(q)}\left|det_\omega(\frac i{2\pi}\partial\overline{\partial}\phi)_x\right|dV_X(x)
         =\int_{K(q)}(-1)^q \frac{c_1({L},h^{{L}})^n}{n!}.
     \end{split}
 \end{equation}
 Finally, combining (\ref{upbdN}) and (\ref{lobdN}), we finish the proof.
\end{proof}
We can use Theorem \ref{esdeN} to deduce the weak Morse inequalities (Corollary \ref{weakMorse}) directly.
\begin{proof}[Proof of Corollary \ref{weakMorse}]
    Using the canonical isomorphism of the weakly Hodge decomposition (\ref{weak H}) $\cH^{0,q}(M,E) \cong \overline{H}^{0,q}_{(2)}(X,L^k)$, and the fact that we have $\overline{H}^{0,q}_{(2)}(X,L^k)\cong H^{0,q}_{(2)}(X,L^k)$, when the fundamental estimate holds, we have $\dim H^{q}_{(2)}(X,L^{k}):=\dim H^{0,q}_{(2)}(X,L^{k})= \dim \cH^{0,q}(X,L^{k})\le N^q(\mu_kk,\square_k)$. This allows us to get the weak Morse inequalities directly by Theorem \ref{esdeN}.
\end{proof}

To prove Theorem \ref{prop_main}, we need the following Lemma \cite[Lemma 3.2.12]{MM07}.
\begin{lemma}\label{algmorse}
    Let
    \begin{equation*}
        0\longrightarrow V^0\xrightarrow{d^0}V^1\xrightarrow{d^1}\cdots\xrightarrow{d^{n-1}}V^n\longrightarrow0
    \end{equation*}
    be a complex of vector spaces. Let $H^i(V^\bullet)=\mathrm{Ker}(d^i)/\mathrm{Im}(d^{j-1})$ with $\mathrm{Im}(d^{-1})=0.$ If $\operatorname{dim}V^{q}<+\infty$ for any $q\le m$, then
    \begin{equation*}
        \sum_{j=0}^q(-1)^{q-j}\dim H^j(V^\bullet)\leqslant\sum_{j=0}^q(-1)^{q-j}\dim V^j, \quad\sum_{j=q}^n(-1)^{j-q}\dim H^j(V^\bullet)\leqslant\sum_{j=q}^n(-1)^{j-q}\dim V^j.   
    \end{equation*}
In particular, we also have
\begin{equation*}
      \sum_{j=0}^n(-1)^{j}\dim H^j(V^\bullet)=\sum_{j=0}^n(-1)^{j}\dim V^j.
\end{equation*}
\end{lemma}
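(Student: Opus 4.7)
My plan is to prove the three statements purely algebraically from rank--nullity plus the definition of cohomology; no structure on the $d^i$ beyond linearity is used. Introduce the shorthand $k_j := \dim \mathrm{Ker}(d^j)$ and $r_j := \dim \mathrm{Im}(d^j)$, with the convention $r_{-1} := 0$ and $r_n := 0$ (the latter because the complex ends at $V^n$, so $d^n$ is interpreted as the zero map). The two identities I will use repeatedly are
\begin{equation*}
\dim V^j = k_j + r_j \qquad \text{and} \qquad \dim H^j(V^\bullet) = k_j - r_{j-1},
\end{equation*}
the first being rank--nullity and the second the definition of $H^j$. Subtracting gives $\dim V^j - \dim H^j(V^\bullet) = r_j + r_{j-1}$, which is the basic identity driving the whole proof.

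For the first inequality I compute
\begin{equation*}
\sum_{j=0}^{q}(-1)^{q-j}\bigl(\dim V^j - \dim H^j(V^\bullet)\bigr) = \sum_{j=0}^{q}(-1)^{q-j}(r_j + r_{j-1}),
\end{equation*}
and then re-index the $r_{j-1}$ sum by $j \mapsto j+1$. The two resulting sums over $r_j$ have opposite signs on the overlapping range $0 \le j \le q-1$ and so cancel pairwise; using $r_{-1}=0$, all that survives is the term $r_q \ge 0$. Rearranging yields exactly the first inequality. The second inequality is handled by the mirror computation: I write
\begin{equation*}
\sum_{j=q}^{n}(-1)^{j-q}\bigl(\dim V^j - \dim H^j(V^\bullet)\bigr) = \sum_{j=q}^{n}(-1)^{j-q}(r_j + r_{j-1}),
\end{equation*}
re-index, cancel on the overlap $q \le j \le n-1$, and use $r_n = 0$; what remains is $r_{q-1} \ge 0$, giving the second inequality.

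For the final equality I simply take $q = n$ in the first inequality (or $q = 0$ in the second); in both cases the leftover term is $r_n = 0$ or $r_{-1} = 0$, so the inequality collapses to equality
\begin{equation*}
\sum_{j=0}^{n}(-1)^{j}\dim H^j(V^\bullet) = \sum_{j=0}^{n}(-1)^{j}\dim V^j,
\end{equation*}
after multiplying by $(-1)^n$ in the first case. The only subtlety is bookkeeping: making sure the boundary conventions $r_{-1}=0$ and $r_n = 0$ are applied consistently, and that the finite-dimensionality hypothesis on the relevant $V^q$ guarantees $k_j$ and $r_j$ are finite for the indices appearing in each partial sum (for the first inequality one needs $V^0,\dots,V^q$ finite-dimensional; for the second, $V^q,\dots,V^n$). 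There is no real obstacle in the argument beyond this careful indexing, since the identity $\dim V^j - \dim H^j(V^\bullet) = r_j + r_{j-1}$ does all the work and the alternating sums telescope automatically.
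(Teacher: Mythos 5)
Your proof is correct: the identity $\dim V^j-\dim H^j(V^\bullet)=r_j+r_{j-1}$ together with the telescoping of the alternating sums (and the boundary conventions $r_{-1}=r_n=0$) is exactly the standard argument, and your bookkeeping of which $V^j$ must be finite-dimensional for each partial sum is right. The paper itself gives no proof — it simply cites \cite[Lemma 3.2.12]{MM07} — and your argument coincides with the classical one found there, so there is nothing to reconcile.
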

Now, we are in the position of proving Theorem \ref{prop_main}.

\begin{proof}[Proof of Theorem \ref{prop_main}]
We only prove the first statement of Theorem \ref{prop_main}, the proof of other statements are same as the first.
Since $[\ddbar_k,\square_k]=0$, we get $\ddbar_k(\cE^q(\mu_kk,\square_k))\subset(\cE^{q+1}(\mu_kk,\square_k))$.
    Consider the chain complex
    \begin{equation*}
          0\longrightarrow \cE^0(\mu_kk,\square_k)\xrightarrow{\ddbar_k}\cE^1(\mu_kk,\square_k)\xrightarrow{\ddbar_k}\cdots\xrightarrow{\ddbar_k}\cE^n(\mu_kk,\square_k)\longrightarrow0.
    \end{equation*}
    Suppose $0\le q\le n$. If there is a compact set $K$ in $X$, such that $j$-th optimal fundamental estimates hold for any $0\le j\le q$. By Theorem \ref{esdeN}, we get
    \begin{equation*}
        \lim_{k\rightarrow \infty}n!k^{-n}N^j(\mu_kk,\square_k)= \int_{K(j)}(-1)^j c_1(L,h^L)^n, \quad \forall 0\le j\le q.
    \end{equation*}
    By the first statement of Lemma \ref{algmorse}, for any $0\le r\le q$, we get
    \begin{equation}\label{usealgM}
    \begin{split}
         &\lim_{k\to\infty} n!k^{-n}\sum_{j=0}^r(-1)^{r-j}\dim H^j(\cE^{\bullet}(\mu_kk,\square_k))
         \le n!k^{-n}\sum_{j=0}^r(-1)^{r-j}\lim_{k\to\infty}\dim \cE^j(\mu_kk,\square_k)\\  
         =& \sum_{j=0}^r(-1)^{r-j}\lim_{k\to\infty}N^j(\mu_kk,\square_k)
         \le \sum_{j=0}^r(-1)^{r-j}\int_{K(j)}(-1)^j c_1(L,h^L)^n
         =\int_{K(\le r)}(-1)^r c_1(L,h^L)^n.
    \end{split}     
   \end{equation}
   Next, we will prove that $H^j(\cE^{\bullet}(\mu_kk,\square_k))\cong H_{(2)}^j(X,L^k)$.
Since the fundamental estimates hold, using strong Hodge decomposition Theorem \ref{strong H}, for any $0\le j\le q$, we get 
\begin{equation*}
    \cE^j(\mu_kk,\square_k)=\cE^j(\mu_kk,\square_k)\cap L^2_{0,j}(X,L^k)=\cE^j(\mu_kk,\square_k)\cap(\cH^{0,j}(X,L^k)\oplus \Im^{j+1}(\ddbar^{*}_k)\oplus\Im^{j-1}(\ddbar_k)).
\end{equation*}
Since $[\ddbar_k,\square_k]=0$, we get 
\begin{equation*}
     \cE^j(\mu_kk,\square_k)=\cH^{0,j}(X,L^k)\oplus \Im^{j+1}(\ddbar^{*}_k|_{\cE^{\bullet}(\mu_kk,\square_k)})\oplus\Im^{j-1}(\ddbar_k|_{\cE^{\bullet}(\mu_kk,\square_k)}).
\end{equation*}
It's not hard to check 
\begin{equation*}
    \Ker^{j}(\cE^{\bullet}(\mu_kk,\square_k))=\cH^{0,j}(X,L^k)\oplus\Im^{j-1}(\ddbar_k|_{\cE^{\bullet}(\mu_kk,\square_k)}).
\end{equation*}
Hence, by strong Hodge decomposition Theorem \ref{strong H},
\begin{equation*}
    H^j(\cE^{\bullet}(\mu_kk,\square_k)):= \Ker^{j}(\cE^{\bullet}(\mu_kk,\square_k))/\Im^{j-1}(\ddbar_k|_{\cE^{\bullet}(\mu_kk,\square_k)})\cong \cH^{0,j}(X,L^k)\cong  H_{(2)}^j(X,L^k).
\end{equation*}
Combining with (\ref{usealgM}), we complete the proof of Theorem \ref{prop_main}.
   
\end{proof}

\section{Examples and Applications}\label{Sec_l2wmi}
In this section, we give some examples and prove Theorem \ref{thm_w1c}--Theorem \ref{thm_complete}. The main materials rely on Li-Shao-Wang \cite{LSW}, Peng-Shao-Wang \cite{PSW} and Ma-Marinescu\cite{MM07}.

\subsection{weakly $1$-complete manifolds}
In this subsection, we follow \cite{PSW} to prove Theorem \ref{thm_w1c}. The strong holomorphic Morse inequalities for weakly $1$-complete manifolds appeared in \cite{Bou:89,M:92,MM07}. In particular \cite{M:92} answered an open question of Ohsawa \cite{Oh:82} affirmatively. We give some necessary definitions firstly.

\begin{definition}
	A complex manifold $X$ is said to be weakly $1$-complete{ \cite{Nak:70}}, if there is a plurisubharmonic function $\varphi \in C^{\infty}(X, \mathbb{R})$, such that $X_c:=\{x \in X : \varphi(x) <c \} \Subset X$ for any $c \in\mathbb{R}$. 
	A Hermitian line bundle $(L,h^L)$ on a complex manifold $X$ is said to be Griffiths $q$-positive at $x \in X$, if the curvature form $R^L$ has at least $n-q+1$ positive eigenvalues at $x$, where $n =\operatorname{dim}_{\mathbb{C}} X$, $1 \leq q \leq n$. 
\end{definition}

 We suppose that $X$ is a weakly $1$-complete manifold of dimension $n$ and $\varphi$ is the exhaustion function of $X$. Let $(L,h^L)$ be a holomorphic Hermitian line bundle on $X$ and $K$ be a compact set of $X$. Assume $(L,h^L)$ is Griffiths $q$-positive on $X\setminus K$ with $q\ge 1$. Fix some $X_c=\{x \in X : \varphi(x) <c \}$ and assume $K\subset X_{c}\Subset X$. 

 Peng-Shao-Wang \cite{PSW} prove that optimal fundamental estimate holds for $X_c$ as follows.

\begin{proposition}[{\cite[Proposition 4.3]{PSW}}]\label{w1cofe}
	Let $X$ be a weakly $1$-complete  manifold of dimension $n$, $(L,h^L)$ be a Hermitian line bundle on complex manifold $X$, which is Griffiths $q$-positive on the outside of the compact set $K\subset X_c$ . Then there exist a compact subset $K'\subset X_c$ with $K\subset K'$ and $C>0$ such that for sufficiently large $k$, we deduce that
	\begin{equation*}
		(1-\frac{C}{k})||s||^2\leq \frac{C}{k}(||\ddbar_{k}s||^2+||\ddbar^{*}_{k,H} s||^2)+\int_{K'} |s|^2 dV_X
	\end{equation*}
	for any $s\in \Dom(\ddbar_{k})\cap \Dom(\ddbar^{*}_{k,H})\cap L^2_{0,j}(X_c,L^k)$ and $q\leq j \leq n$,
	where the $L^2$-norm is given by $\omega$, $h^{L^k}$ on $X_c$.	
	
\end{proposition}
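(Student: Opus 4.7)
The plan is to derive the estimate from the Bochner--Kodaira--Nakano (BKN) inequality on the bounded domain $X_c$, combined with a partition-of-unity localization that exploits the Griffiths $q$-positivity of $(L,h^L)$ outside the compact set $K$. Since $X_c=\{\varphi<c\}\Subset X$ with $\varphi$ plurisubharmonic, the boundary $bX_c$ is weakly pseudoconvex, and any $s\in\Dom(\ddbar_k)\cap\Dom(\ddbar^{*}_{k,H})$ automatically satisfies the $\ddbar$-Neumann boundary condition at $bX_c$. Consequently, the boundary integral in the BKN identity is non-negative, which yields
\[
\|\ddbar_k s\|^2+\|\ddbar^{*}_{k,H}s\|^2 \ \ge\ \int_{X_c}\langle [R^{L^k},\Lambda]s,s\rangle\,dV_X \ -\ C_1\|s\|^2,
\]
where $C_1$ absorbs the torsion of $\omega$, which is bounded on $X_c$ since $X_c\Subset X$.

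Next I would pick a smooth partition of unity $\chi_1^2+\chi_2^2\equiv 1$ on $X_c$ with $\supp\chi_1\subset K'$ and $\chi_2\equiv 0$ on $K$, for some $K\Subset K'\subset X_c$. Applying the above inequality to $\chi_2 s$, whose support lies in $X_c\setminus K$ where Griffiths $q$-positivity gives $\langle[R^{L^k},\Lambda]u,u\rangle\ge c_0 k|u|^2$ for $j\ge q$, yields
\[
(c_0 k-C_1)\|\chi_2 s\|^2 \ \le\ \|\ddbar_k(\chi_2 s)\|^2+\|\ddbar^{*}_{k,H}(\chi_2 s)\|^2.
\]
The standard IMS-type localization, based on the identity $\sum_i\chi_i\,d\chi_i\equiv 0$, shows that the cutoff cross-terms cancel to leading order, producing
\[
\sum_{i=1,2}\bigl(\|\ddbar_k(\chi_i s)\|^2+\|\ddbar^{*}_{k,H}(\chi_i s)\|^2\bigr) \ \le\ \|\ddbar_k s\|^2+\|\ddbar^{*}_{k,H}s\|^2\ +\ C_3\int_{K'}|s|^2\,dV_X.
\]
Combining these with $\|s\|^2=\|\chi_1 s\|^2+\|\chi_2 s\|^2$ and $\|\chi_1 s\|^2\le\int_{K'}|s|^2\,dV_X$ yields
\[
(c_0 k-C_1)\|s\|^2\ \le\ \|\ddbar_k s\|^2+\|\ddbar^{*}_{k,H}s\|^2\ +\ (c_0 k+C_3)\int_{K'}|s|^2\,dV_X.
\]

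The main subtlety, and the feature that makes the estimate \emph{optimal}, is forcing the coefficient of $\int_{K'}|s|^2\,dV_X$ to be exactly $1$. After dividing by $c_0k$, this coefficient is $1+C_3/(c_0k)$; I would handle the extra $C_3/(c_0k)$ by writing $(1+C_3/(c_0k))\int_{K'}|s|^2\,dV_X\le \int_{K'}|s|^2\,dV_X+(C_3/(c_0k))\|s\|^2$ and absorbing the resulting $(C_3/(c_0k))\|s\|^2$ into the left-hand factor $1-C/k$. Choosing $C$ to dominate $(C_1+C_3+1)/c_0$ then gives the stated optimal form for all sufficiently large $k$. All constants are manifestly $k$-independent since $X_c\Subset X$, and the non-negativity of the boundary term under $\ddbar^{*}_{k,H}$ follows from plurisubharmonicity of $\varphi$ near $bX_c$ in the standard way.
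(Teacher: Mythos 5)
The paper does not prove this proposition itself; it quotes it verbatim from \cite[Proposition 4.3]{PSW}, so your proposal has to be judged against the argument there. Your overall architecture (Bochner--Kodaira--Nakano on $X_c$ with a non-negative boundary term, an IMS-type partition of unity $\chi_1^2+\chi_2^2=1$ separating $K'$ from the positivity region, and absorbing the excess $O(1/k)\|s\|^2$ into the factor $1-C/k$ to force the coefficient of $\int_{K'}|s|^2$ to be exactly $1$) is indeed the same skeleton used in \cite{LSW,PSW}, and that last absorption step is handled correctly.

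The genuine gap is the curvature step. For a $(0,j)$-form the operator you invoke is not $[R^{L^k},\Lambda]$ in the Nakano sense (that term has the wrong sign structure on $(0,j)$-forms); the usable inequality is the torsion-corrected estimate of \cite[Cor.~1.4.17]{MM07}, whose curvature term in a frame diagonalizing $R^L$ is $\sum_{|J|=j}\bigl(\sum_{i\in J}\lambda_i\bigr)|s_J|^2$. More importantly, Griffiths $q$-positivity only says that at least $n-q+1$ eigenvalues are positive; it puts no bound on the size of the up to $q-1$ negative eigenvalues, so for an arbitrary Hermitian metric $\omega$ the sum $\sum_{i\in J}\lambda_i$ over a $j$-element set ($j\ge q$) can be negative, and your claimed lower bound $c_0k|u|^2$ fails. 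This is precisely why Theorem \ref{thm_w1c} asserts the \emph{existence} of a metric $\omega$: one must first construct an adapted metric on a neighbourhood of $\overline{X}_c\setminus K$ (rescaling the directions of the non-positive eigenvalues so that, with respect to $\omega$, the sum of any $j\ge q$ eigenvalues of $R^L$ is uniformly bounded below by some $c_0>0$). That construction is the heart of \cite[Prop.~4.3]{PSW} and is entirely missing from your proposal. Two smaller points you should also address: $bX_c=\{\varphi=c\}$ is smooth only if $c$ is a regular value of $\varphi$ (choose it by Sard), and the Morrey--Kohn--H\"ormander identity applies to forms smooth up to the boundary satisfying the $\ddbar$-Neumann condition, so passing to general $s\in\Dom(\ddbar_k)\cap\Dom(\ddbar^*_{k,H})$ requires H\"ormander's density lemma in the graph norm rather than the assertion that such $s$ ``automatically'' satisfy the boundary condition.
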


Using the optimal fundamental estimate, we can give a proof of Theorem \ref{thm_w1c}.

    \begin{proof}[Proof of Theorem \ref{thm_w1c}]
	 By Proposition {\ref{w1cofe}} and Theorem {\ref{prop_main}}, we deduce that
	for any $q\le r\le n$,
	\begin{equation*}
		\sum_{j=r}^n(-1)^{j-r}\dim_\mathbb{C}H_{(2)}^j(X_c,L^k)\leq \frac{k^n}{n!}\int_{K^{'}(\ge r)}{(-1)^r c_1(L,h^L)^n}+o(k^n).
	\end{equation*}
	It only needs to prove that $K(j)=K^{'}(j)$ for $j\ge q$. Since $K\subset K^{'}$, we have $K(j)\subset K^{'}(j)$. For the opposite direction, since $L$ is Griffiths $q$- positive on $M\setminus K$,  $R^L$ has $n-q+1$ positive eigenvalues in $M\setminus K$ at least. So $R^{L}$ has $n-(n-q+1)=q-1<j$ negative eigenvalues at most. In particular, when $L>0$, by\cite[Theorem 6.2]{Takegoshi:83}, we have $ H^{j}(M,L^k)\cong \cH^{0,j}(X_c,L^{k})\cong  H^{0,j}_{(2)}(X_c,L^{k})$ for $k\gg1$ and every $i\in \mathbb{N}^+$. This completes the proof. 
\end{proof}

\subsection{Pseudoconvex domains}

	Let $M$ be a relatively compact domain with smooth boundary $bM$ in a complex manifold $X$. Let $\rho\in \cC^\infty(X,\R)$ such that $M=\{ x\in X: \rho(x)<0 \}$ and $d\rho\neq 0$ on $bM=\{x\in X: \rho(x)=0\}$. We denote the closure of $M$ by $\overline{M}=M\cup bM$. We say that $\rho$ is a defining function of $M$. Let $T^{(1,0)}bM:=\{ v\in T^{(1,0)}X: \dbar\rho(v)=0 \}$ be the analytic tangent bundle to $bM$. The Levi form of $\rho$ is the $2$-form $\cL_\rho:=\dbar\ddbar\rho\in \cC^\infty(bM, T^{(1,0)*}bM\otimes T^{(0,1)*}bM)$.	
	A relatively compact domain $M$ with smooth boundary $bM$ in a complex manifold $X$ is called pseudoconvex if the Levi form $\cL_\rho$ is semi-positive definite. 

   In \cite{LSW}, the authors give a proof of optimal fundamental estimate in pseudoconvex domain $M$. 
   \begin{proposition}[{\cite[Proof of Theorem 1.4]{LSW}}]\label{pse_ofe}
       Let $M\Subset X$ be a smooth pseudoconvex domain in a complex manifold $X$ of dimension $n$. Let $(L,h^L)$ be a holomorphic Hermitian line bundle on $X$.  Let $(L,h^L)$ be positive in a neighbourhood of the boundary $bM$ of $M$.
	Then there is a compact subset $K^{'}\Subset M$, a constant $C>0$ and a Hermitian metric $\omega$ on $X$, such that for sufficiently large $k$
      \begin{equation*}
		(1-\frac{C}{k})||s||^2\leq \frac{C}{k}(||\ddbar_{k}s||^2+||\ddbar^{*}_{k} s||^2)+\int_{K'} |s|^2 dV_X
	\end{equation*}
	for any $s\in \Dom(\ddbar_{k})\cap \Dom(\ddbar^{*}_{k})\cap L^2_{0,j}(M,L^k)$ and $1\leq j \leq n$,
	where the $L^2$-norm is given by $\omega$, $h^{L^k}$ on $M$.	
   \end{proposition}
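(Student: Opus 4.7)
The plan is to establish the optimal fundamental estimate in Proposition \ref{pse_ofe} by combining the Bochner-Kodaira-Nakano-Hörmander-Takegoshi formula on pseudoconvex domains with a carefully chosen Hermitian metric $\omega$ that is calibrated to the positive curvature of $L$ near $bM$. The strategy is the same in spirit as Ma-Marinescu's \cite[Ch.\ 3]{MM07}: the positivity of $L^k$ near the boundary provides a large positive curvature term of order $k$, while the semi-positivity of the Levi form $\cL_\rho$ guarantees that the boundary contribution in the BKN formula for $\dbar$-Neumann boundary conditions is non-negative on $(0,j)$-forms with $j\geq 1$.

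First I would choose an open neighbourhood $U$ of $bM$ in $\ov M$ on which $R^L$ is strictly positive, and pick a Hermitian metric $\omega$ on $X$ such that $\omega = c_1(L,h^L)$ on a slightly smaller neighbourhood $U' \Subset U$ of $bM$. With this calibration, the endomorphism $Q_L$ has all eigenvalues equal to $2\pi$ on $U'$, so that for any $s \in \Omega^{0,j}(\ov M, L^k) \cap \Dom(\ddbar_k^*)$ with $j\geq 1$ one obtains a pointwise lower bound of the form $\langle k R^L s, s\rangle_\omega \geq 2\pi k |s|^2$ on $U'$. The usual zeroth order terms coming from torsion, $R^{\det}$, and commutators $[R^L,\Lambda]$ are bounded independently of $k$, so they contribute only an $O(1)$ perturbation that is absorbed into the $C/k$-factor.

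Next I would apply the BKN--Hörmander--Takegoshi identity on $M$ for the $\dbar$-Neumann problem:
\begin{equation*}
\|\ddbar_k s\|^2 + \|\ddbar_k^* s\|^2 \;=\; \|\nabla^{1,0} s\|^2 + \bigl\langle (k R^L \otimes \mathrm{Id} + T_k) s,\, s\bigr\rangle + \int_{bM}\langle \cL_\rho s, s\rangle \, dS,
\end{equation*}
where $T_k$ is a zeroth order operator uniformly bounded in $k$. Pseudoconvexity makes the boundary integral non-negative for $s\in\Dom(\ddbar_k^*)$ (the $(0,1)$-part of $s$ must vanish in the direction $\pr\rho$), and the bulk curvature term is $\geq (2\pi k - C_0)|s|^2$ pointwise on $U'$ by the calibration above. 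To transfer this to a bound on $\|s\|^2$ over all of $M$, I would introduce a cutoff $\chi\in C^\infty_c(M)$ equal to $1$ on a compact set $K' \Subset M$ with $M\setminus K' \subset U'$, and split $s = \chi s + (1-\chi)s$. Applying the identity to $(1-\chi)s$ inside $U'$ produces $(1-C/k)\|(1-\chi)s\|^2 \leq (C/k)(\|\ddbar_k s\|^2 + \|\ddbar_k^* s\|^2) + C'\int_{K'}|s|^2 dV_X$, while the $\chi s$ part is controlled trivially by $\int_{K'}|s|^2 dV_X$; commutator terms between $\chi$ and $\ddbar_k,\ddbar_k^*$ are $O(1)$ and enlarge $K'$ but not the leading constant.

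The hard part will be the optimality, that is, making the coefficient in front of $\|s\|^2$ equal to exactly $1 - C/k$ rather than a fixed constant less than $1$. This forces the calibration $\omega = c_1(L,h^L)$ near $bM$ to be implemented so sharply that the leading order curvature constant on $U'$ precisely matches the normalization used to define $\omega$-norms of $(0,j)$-forms; any mismatch produces a fixed multiplicative deficit that destroys optimality. Once this calibration is carried out, combining the two cutoff estimates and reabsorbing the zeroth-order and commutator terms by the factor $C/k$ (valid for $k$ large) yields the required inequality, concluding the proof of Proposition \ref{pse_ofe}; Theorem \ref{thm_psc} then follows by feeding this estimate into Theorem \ref{prop_main}.
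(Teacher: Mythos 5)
The paper does not actually prove this proposition: it is quoted from the proof of Theorem 1.4 in [LSW], so the only meaningful comparison is with that standard argument. Your overall strategy is the same one: calibrate $\omega$ so that $\omega=c_1(L,h^L)$ on a neighbourhood of $bM$ where $L>0$, apply the Bochner--Kodaira--Nakano formula with boundary term for the $\dbar$-Neumann problem, discard the Levi-form boundary integral by pseudoconvexity, and localize with a cutoff so that the region where the curvature is only bounded below by $-Ck$ contributes $\int|s|^2$ over a compact set with coefficient exactly $1$. You are also right that the localization step is where optimality comes from: applying BKN globally and dividing by $2\pi kq$ leaves an $O(1)$ (not $1+O(1/k)$) coefficient in front of the interior term, which only yields the ordinary fundamental estimate. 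After tracking the commutators $[\ddbar_k,\chi]$, which live in the transition annulus and are absorbed into $\frac{C}{k}\|s\|^2$ at the cost of replacing $K'$ by $\supp\chi$, your scheme does produce the stated inequality (modulo the usual Andreotti--Vesentini/H\"ormander density lemma to pass from smooth forms satisfying the $\dbar$-Neumann condition to all of $\Dom(\ddbar_k)\cap\Dom(\ddbar_k^{*})$).

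There is, however, one step that fails as literally written. The pointwise bound ``$\langle kR^{L}s,s\rangle_\omega\ge 2\pi k|s|^2$ on $U'$'' is not what the Nakano curvature term gives on $(0,j)$-forms: in a frame diagonalizing $R^L$ with eigenvalues $\lambda_1,\dots,\lambda_n$, the operator $[\sqrt{-1}kR^{L},\Lambda]$ acts on a $(0,j)$-form by $k\bigl(\sum_{i\in J}\lambda_i-\sum_{l=1}^{n}\lambda_l\bigr)$, which under the calibration $\lambda_l\equiv 2\pi$ equals $2\pi k(j-n)\le 0$ for $j<n$ --- the wrong sign, and the whole estimate collapses. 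The standard repair, used in [MM07] and [LSW], is to identify $\Lambda^{0,j}T^{*}X\otimes L^{k}$ with $\Lambda^{n,j}T^{*}X\otimes L^{k}\otimes K_X^{*}$ and apply the $(n,j)$-form inequality, whose curvature term is $k\sum_{i\in J}\lambda_i+O(1)\ge 2\pi kj-C_0$, the $O(1)$ coming from $R^{K_X^{*}}$. Relatedly, your closing paragraph misplaces the source of optimality: no ``sharp matching'' of $\omega$ to $c_1(L,h^L)$ is needed for the fundamental estimate (any $\omega$ with $R^{L}\ge\varepsilon\omega$ near $bM$ works); what matters is that the positivity near $bM$ is of order $k$, so that dividing the localized inequality by $\varepsilon k$ puts a factor $1/k$ on every term except the trivial $\int_{\supp\chi}|s|^2$. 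The exact calibration is used elsewhere (in the local model computation for the Bergman kernel), not here.
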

Since optimal fundamental estimate holds on $M$ for any $(0,j)$-forms ($1\le j\le n$), using Theorem {\ref{prop_main}}, under the same conditions as in Proposition \ref{pse_ofe}, for any $1\le r\le n$, we get
 \begin{equation*}
     \sum_{j=r}^n(-1)^{j-r}\dim_\mathbb{C}H_{(2)}^j(M,L^k)\leq \frac{k^n}{n!}\int_{K^{'}(\ge r)}{(-1)^r c_1(L,h^L)^n}+o(k^n).
 \end{equation*}
 Let $K:=K^{'}$, we can get Theorem {\ref{thm_psc}} directly.

\subsection{$q$-convex manifolds}

A complex manifold $X$ of dimension $n$ is called $q$-convex (see \cite{AG:62}) if there exists a smooth function $\varrho\in \cC^\infty(X,\R)$ such that the sublevel set $X_c=\{ \varrho<c\}\Subset X$ for all $c\in \R$ and the complex Hessian $\dbar\ddbar\varrho$ has $n-q+1$ positive eigenvalues outside a compact subset $K\subset X$. Here $X_c\Subset X$ means that the closure $\overline{X}_c$ is compact in $X$. We call $\varrho$ an exhaustion function and $K$ exceptional set.

From now on let $X$ be a $q$-convex manifold of dimension $n$. Let $u_0<u<c<v$ such that the exceptional subset $K\subset X_{u_0}:=\{x\in X: \varrho(x)<{u_0} \}$. Then, we modify the prescribed hermitian metric $h^L$ on $L$.
Let $\chi(t)\in\cC^\infty(\R)$ such that $\chi'(t)\geq 0$, $\chi''(t)\geq 0$. We define a Hermitian metric $h^{L^k}_\chi:=h^{L^k}e^{-k\chi(\varrho)}$ on $L^k$ for each $k\geq 1$ and we set $L^k_\chi:=(L^k,h^{L^k}_\chi)$. Thus
\begin{equation*}
R^{L^k_\chi}=kR^{L_\chi}=kR^L+k\chi'(\varrho)\dbar\ddbar\varrho+k\chi''(\varrho)\dbar\varrho\wedge\ddbar\varrho.
\end{equation*}

Li-Shao-Wang{\cite{LSW}} prove the optimal fundamental estimate on $q$-convex manifolds as follows.
\begin{proposition}[{\cite[Proposition 3.4]{LSW}}]\label{1coxFE}
	Let $X$ be a $q$-convex manifold of dimension $n$ with the exceptional set $K\subset X_c$. Then there exist a compact subset $K'\subset X_c$ with $K\subset K'$, $C_0>0$ and $C_1>0$ such that for any sufficiently large $k$ and  $\chi(t)\in\cC^\infty(\R)$ satisfying $\chi'(\varrho)\geq C_1$ on $X_v\setminus \overline{X}_u$, we have
	\begin{equation*}
	(1-\frac{C_0}{k})||s||^2\leq \frac{C_0}{k}(||\ddbar_ks||^2+||\ddbar^{*}_{k,H}s||^2)+\int_{K'} |s|^2 dv_X
	\end{equation*}
	for any $s\in \Dom(\ddbar_k)\cap \Dom(\ddbar^{*}_{k,H})\cap L^2_{0,j}(X_c,L^k)$ and $q\leq j \leq n$,
	where the $L^2$-norm is given by $\omega$, $h^{L^k}_\chi$  on $X_c$.
\end{proposition}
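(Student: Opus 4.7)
The plan is to establish the optimal fundamental estimate via the Bochner-Kodaira-Nakano (BKN) formula applied on the sublevel set $X_c$ with the modified metric $h^{L^k}_\chi$, exploiting the fact that $\chi'(\varrho)\ddbar\varrho$ supplies the missing positive eigenvalues of $R^L$ in the $q$-convex direction. Concretely, I would choose an intermediate level $u_1$ with $u<u_1<c$ so that $K\subset X_{u_0}\subset X_u\subset X_{u_1}\Subset X_c$, set $K':=\overline X_{u_1}$, and pick $C_1>0$ large enough that on the transition region $X_v\setminus\overline X_u$ the curvature
\[
R^{L_\chi}=R^L+\chi'(\varrho)\ddbar\varrho+\chi''(\varrho)\dbar\varrho\wedge\ddbar\varrho
\]
has at least $n-q+1$ positive eigenvalues, so that the commutator $[iR^{L_\chi},\Lambda]$ acting on $(0,j)$-forms with $j\ge q$ is bounded below by a strictly positive constant $\mu>0$ independent of $k$.

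Next, I would apply the BKN formula with boundary term to $s\in\Dom(\dbar_k)\cap\Dom(\dbar^*_{k,H})\cap L^2_{0,j}(X_c,L^k)$ (see Ma-Marinescu \cite{MM07} for the precise version on smoothly bounded domains). The $\dbar$-Neumann boundary condition encoded in $s\in\Dom(\dbar^*_{k,H})$ and the plurisubharmonicity of $\varrho$ near $bX_c$ (which one may arrange by picking $c$ so that $\dbar\ddbar\varrho$ is nonnegative on the analytic tangent bundle of $bX_c$, or by absorbing any defect using a sufficiently large $\chi''$) make the boundary term nonnegative. The interior contribution gives
\[
\|\dbar_k s\|^2+\|\dbar^*_{k,H}s\|^2\ge \int_{X_c}\langle [iR^{L^k_\chi},\Lambda]s,s\rangle\,dV_X - C\|s\|^2,
\]
where the $-C\|s\|^2$ absorbs torsion of $\omega$. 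Splitting the first integral into $K'$ and its complement and using $[iR^{L^k_\chi},\Lambda]\ge k\mu$ on $X_c\setminus K'$ for $(0,j)$-forms with $j\ge q$, one obtains
\[
k\mu\int_{X_c\setminus K'}|s|^2\,dV_X\le \|\dbar_k s\|^2+\|\dbar^*_{k,H}s\|^2+Ck\int_{K'}|s|^2\,dV_X+C\|s\|^2.
\]

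Adding $\int_{K'}|s|^2\,dV_X$ to both sides, dividing by $k\mu$, and rearranging yields
\[
\Bigl(1-\tfrac{C}{k\mu}\Bigr)\|s\|^2\le \tfrac{1}{k\mu}\bigl(\|\dbar_k s\|^2+\|\dbar^*_{k,H}s\|^2\bigr)+\Bigl(1+\tfrac{C}{\mu}\Bigr)\int_{K'}|s|^2\,dV_X,
\]
which has the right shape but with a constant in front of $\int_{K'}|s|^2$. To recover the optimal coefficient $1$ in front of the compactly supported term, I would slightly enlarge $K'$: replace it by $K''$ and absorb the factor $(1+C/\mu)$ into the integration over $K''$, using that $|s|^2$ on the annular region $K''\setminus K'$ is controlled in the same BKN estimate (one may iterate the argument with a nested sequence of cutoffs, or use a single cutoff $\eta$ equal to $1$ outside $K''$ and apply BKN to $\eta s$, so that the transition band only sees multiples of $\int_{K''}|s|^2$ directly, producing the coefficient $1$). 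Setting $C_0:=\max(C/\mu,1+C/\mu,\ldots)$ yields the desired inequality with $K'$ replaced by $K''$.

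The main obstacle will be handling the boundary term on $bX_c$: the BKN formula on a bounded domain contains an integral of the Levi form of the boundary against $s$, and in our $q$-convex setting the Levi form of $\{\varrho=c\}$ has the wrong sign on up to $q-1$ directions. The crucial point is that the term $\chi''(\varrho)\dbar\varrho\wedge\ddbar\varrho$ in $R^{L_\chi}$ contributes along the normal direction, which, combined with the $\dbar$-Neumann condition $s\in\Dom(\dbar^*_{k,H})$, can be used to compensate for the non-positive Levi contribution. Choosing $\chi''$ to grow fast enough near $\varrho=c$ (and arranging $\chi'\ge C_1$ on $X_v\setminus\overline X_u$) is exactly what makes both the interior curvature estimate and the boundary term come out with the correct sign; verifying this simultaneously is where the delicate choice of $\chi$ lives, and this is the technical heart of Proposition \ref{1coxFE}.
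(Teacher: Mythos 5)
The paper does not actually prove this proposition: it is quoted verbatim from \cite[Proposition 3.4]{LSW}, so there is no internal argument to compare against. Your overall architecture --- Bochner--Kodaira--Nakano with boundary term on $X_c$ for the modified metric $h^{L^k}_\chi$, positivity of $\chi'(\varrho)\,\partial\overline{\partial}\varrho$ outside a compact set, and a cutoff function applied to $\eta s$ to recover the coefficient $1$ in front of $\int_{K'}|s|^2$ --- is the standard route and the one followed in \cite{LSW}; in particular your diagnosis that the naive splitting yields a spurious factor $1+C/\mu$ and that localizing the estimate to $\eta s$ (with $\eta\equiv 1$ outside $K'$, so that the gradient-of-cutoff error enters as $\tfrac{C}{k\mu}\|s\|^2$ after dividing by $k\mu$) repairs it, is correct.

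There are, however, two genuine gaps. First, the inference ``$R^{L_\chi}$ has at least $n-q+1$ positive eigenvalues on $X_v\setminus\overline{X}_u$, hence $[iR^{L_\chi},\Lambda]\ge\mu>0$ on $(0,j)$-forms for $j\ge q$'' is false as stated: the operator $[iR^{L_\chi},\Lambda]$ acts on the component $s_I$ by the sum $\sum_{l\in I}\lambda_l$ of $j$ eigenvalues, and with up to $q-1$ negative eigenvalues of arbitrary magnitude this sum can be very negative (take $n=q=2$ and eigenvalues $+1,-100$); enlarging $\chi'$ does not help since it rescales positive and negative eigenvalues of $\partial\overline{\partial}\varrho$ alike. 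The missing ingredient is the Andreotti--Grauert choice of the base metric: one first constructs $\omega$ adapted to $\varrho$ (shrinking it along the positive eigendirections of $\partial\overline{\partial}\varrho$ on $\overline{X}_v\setminus X_{u_0}$) so that the sum of any $j\ge q$ eigenvalues of $\partial\overline{\partial}\varrho$ with respect to $\omega$ is bounded below by a positive constant there; only then does $\chi'\ge C_1$ make $\chi'\partial\overline{\partial}\varrho$ dominate $R^L$ and the torsion. This is precisely why the proposition asserts the estimate for a particular $\omega$. Second, your proposed mechanism for the boundary term does not work: for $s\in\Dom(\overline{\partial}^{*}_{k,H})$ the normal components of $s$ vanish on $bX_c$, and $\partial\varrho\wedge\overline{\partial}\varrho$ annihilates the surviving tangential components, so the term $\chi''(\varrho)\,\partial\varrho\wedge\overline{\partial}\varrho$ (an interior curvature term in any case) contributes nothing towards the sign of the boundary integral. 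The nonnegativity of $\int_{bX_c}\mathcal{L}_\varrho(s,s)$ for $(0,j)$-forms with $j\ge q$ comes again from the adapted $\omega$: since $bX_c$ lies outside $K$, the Levi form restricted to $T^{(1,0)}bX_c$ has at least $n-q$ positive eigenvalues, and with $\omega$ chosen as above the sum of any $j$ of them is nonnegative. With these two points supplied, the rest of your argument goes through and yields the optimal coefficient.
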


Then using results above, we can give a proof of Theorem \ref{thm_q_convex}.
\begin{proof}[Proof of Theorem \ref{thm_q_convex}]
    Let $u_0<u<c<v$ such that $K\subset X_{u_0}\Subset K'\Subset X_c\Subset X_v$. We can suppose $K\cup M\subset X_{u_0}$ by choosing a suitable $u_0$, where $M$ is a compact set in the condition of Theorem \ref{thm_q_convex}. We choose now $\chi=\chi(t)\in\cC^\infty(\R)$, $\chi'(t)\geq 0$, $\chi''(t)\geq 0$ for all $t\in\R$ such that  $\chi=0$ on $(-\infty,u_0)$ and $\chi'(\varrho)\geq C_3>0$ on $X_v\setminus \ov X_u$. From Proposition \ref{1coxFE} and Theorem \ref{prop_main}, there exists a compact subset $K'\subset X_c$ with $K\subset K'$ such that
     for any $s+q-1\le r\le n$, we have 
  \begin{equation}\label{eq_h_xc}
     \sum_{j=r}^n(-1)^{j-r}\dim_\mathbb{C}H_{(2)}^j(X_c,L^k)\leq \frac{k^n}{n!}\int_{K'(\ge r,h_\chi^L)}{(-1)^r c_1(L,h^L)^n}+o(k^n).
 \end{equation}
	
  We have
		\begin{equation*}
		\sqrt{-1}R^{L_\chi}=\sqrt{-1}R^L+\sqrt{-1}\chi'(\varrho)\dbar\ddbar\varrho+\sqrt{-1}\chi''(\varrho)\dbar\varrho\wedge\ddbar\varrho\geq \sqrt{-1}R^L+\sqrt{-1}\chi'(\varrho)\dbar\ddbar\varrho.
		\end{equation*}
		Since $R^L$ has at least $n-s+1$ non-negative eigenvalues (thus at most $s-1$ negative eigenvalues) on $X\setminus M$, $\chi'(\varrho)\geq 0$ on $X$ and $\dbar\ddbar\varrho$ has at least $n-q+1$ positive eigenvalues (thus at most $q-1$ negative eigenvalues) on $X\setminus K$, the number of negative eigenvalues of $R^{L_\chi}$ is strictly less than $j$ on $X\setminus (M\cup K)$ for any $j\geq s+q-1$ (note $s+q-1>s-1$ and $>q-1$), and thus
		\begin{equation*}
		K'(j,h^L_\chi)\subset K\cup M\subset X_{u_0}.
		\end{equation*}

However, by $\chi=0$ on $(-\infty,u_0)$, we have $h^L_\chi=h^L$ on $X_{u_0}$ and $c_1(L,h^L_\chi)=c_1(L,h^L)$ on $X_{u_0}$. Thus $K'(j,h^L_\chi)=X_{u_0}(j,h_\chi^L)=X_{u_0}(j,h^L)=K'(j,h^L)\setminus (K'\setminus X_{u_0})(j,h^L)=K'(j,h^L)$ for $j\geq s+q-1$. It follows that
\begin{equation*}
 \int_{K'(\ge r,h^L_\chi)}(-1)^r c_1(L,h_\chi^L)^n
= \int_{K'(\ge r,h^L)}(-1)^r c_1(L,h^L)^n
\end{equation*}
for $q+s-1\leq r\leq n$.	 Finally, by (\ref{eq_h_xc}), it follows that for $q+s-1\leq r\leq n$, we have
	 \begin{equation}\label{eq_q_xc}
  \begin{split}
       \sum_{j=r}^n(-1)^{j-r}\dim_\mathbb{C}H_{(2)}^j(X_c,L^k)
       \leq &\frac{k^n}{n!}\int_{K'(\ge r,h^L)}{(-1)^r c_1(L,h^L)^n}+o(k^n)\\
       =&\frac{k^n}{n!}\int_{M(\ge r)}{(-1)^r c_1(L,h^L)^n}+o(k^n).
  \end{split}
 \end{equation}
    Here the last equality is from that $K'(j,h^L)=M(j,h^L)$ for any $j\ge r\ge q+s-1$.

		 By \cite[Theorem 3.5.6 (Hörmander),
	Theorem 3.5.7 (Andreotti-Grauert)]{MM07}, we have, for any $j\geq q$,
	$$H_{(2)}^{j}(X_c,L^k)\cong H^j(X_v,L^k)\cong H^j(X,L^k).$$ 
	Since $s\geq 1$, we can apply the above identification in \eqref{eq_q_xc} to complete our proof.
\end{proof}

\subsection{Complete manifolds}

A Hermitian manifold $(X,\omega)$ is called complete, if all geodesics are defined for all time on the underlying Riemannian manifold. 
In \cite{LSW}, the authors find optimal fundamental estimate on complete manifolds as follows.

\begin{lemma}[{\cite[Lemma 3.6]{LSW}}]\label{lem_complete}
	Let $(X,\omega)$ be a complete Hermitian manifold of dimension $n$. Let $(L,h^L)$ be a holomorphic Hermitian line bundle  on $X$ such that $\omega=c_1(L,h^L)$ on $X\setminus M$ for a compact subset $M$. Then there exist $C_0>0$ and $M\Subset M'$ such that for each $1\leq q\leq n$, we have for sufficiently large $k$,
	\begin{equation*}
	\left( 1-\frac{C_0}{k} \right)\|s\|^2\leq \frac{C_0}{k}\left( \|\ddbar^{K_X}_k s\|^2+\|\ddbar_k^{K_X*} s\|^2 \right)+\int_{M'}|s|^2dV_X
	\end{equation*}
	for $s\in\Dom(\ddbar^{K_X}_{k})\cap\Dom(\ddbar^{K_X*}_{k})\cap L^2_{n,q}(X,L^k)$.
\end{lemma}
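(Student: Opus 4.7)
The plan is to apply the Bochner--Kodaira--Nakano (BKN) identity where the curvature of $L$ supplies the required positivity, and then to transfer that estimate across the compact ``bad'' set $M$ by means of a cut-off; completeness of $(X,\omega)$ is used only to justify the extension from smooth compactly supported forms to the full $L^2$-domain.

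First, on $X\setminus M$ the Chern--Weil form $c_1(L,h^L)$ is closed, so the hypothesis $\omega=c_1(L,h^L)=\tfrac{\sqrt{-1}}{2\pi}R^L$ forces $\omega$ to be K\"ahler there, with $\sqrt{-1}R^L=2\pi\omega$. Consequently, on $(n,q)$-forms with values in $L^k$ the curvature commutator becomes a scalar,
\begin{equation*}
  [\sqrt{-1}R^{L^k},\Lambda_\omega]=2\pi k q\cdot\mathrm{id}.
\end{equation*}
Applying the BKN identity on the K\"ahler region $X\setminus M$ (where the torsion correction vanishes) yields, for every smooth compactly supported $(n,q)$-form $\alpha$ with $\supp\alpha\subset X\setminus M$,
\begin{equation*}
  \|\ddbar^{K_X}_k\alpha\|^2+\|\ddbar^{K_X*}_k\alpha\|^2\ge 2\pi k q\,\|\alpha\|^2.
\end{equation*}

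Next, choose a compact set $M'$ with $M\Subset M'$ and a cut-off $\eta\in\cC^\infty(X,[0,1])$ satisfying $\eta\equiv 0$ on $M$, $\eta\equiv 1$ on $X\setminus M'$, and $\supp d\eta\subset M'\setminus M$. For a test form $s$, apply the previous inequality to $\alpha:=\eta s$ (supported in $X\setminus M$). Expand $\ddbar^{K_X}_k(\eta s)=\eta\,\ddbar^{K_X}_k s+\ddbar\eta\wedge s$ and the analogous formula for $\ddbar^{K_X*}_k(\eta s)$ (contributing an interior-product term), then use $|a+b|^2\le 2|a|^2+2|b|^2$. The cut-off commutator contributions are controlled by $C\|s\|^2_{M'}$ with $C$ depending only on $\|d\eta\|_\infty$, giving
\begin{equation*}
  2\pi k q\,\|\eta s\|^2\le 2\bigl(\|\ddbar^{K_X}_k s\|^2+\|\ddbar^{K_X*}_k s\|^2\bigr)+C\|s\|^2_{M'}.
\end{equation*}
To promote this from smooth compactly supported $s$ to arbitrary $s\in\Dom(\ddbar^{K_X}_k)\cap\Dom(\ddbar^{K_X*}_k)\cap L^2_{n,q}(X,L^k)$ one invokes completeness: by the classical Andreotti--Vesentini / Gaffney density theorem, $\Omega^{n,q}_0(X,L^k)$ is dense in this domain in the graph norm.

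Finally, since $\|\eta s\|^2\ge\|s\|^2-\|s\|^2_{M'}$, rearranging gives
\begin{equation*}
  2\pi k q\,\|s\|^2\le 2\bigl(\|\ddbar^{K_X}_k s\|^2+\|\ddbar^{K_X*}_k s\|^2\bigr)+(2\pi k q+C)\|s\|^2_{M'},
\end{equation*}
and dividing by $2\pi k q+C$ turns the coefficient in front of $\|s\|^2_{M'}$ into exactly $1$, while the coefficient of $\|s\|^2$ becomes $1-C/(2\pi k q+C)\ge 1-C_0/k$ for a suitable $C_0$ independent of $k$ and $q\ge 1$; the $\ddbar$-terms then carry a coefficient bounded by $C_0/k$. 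The main subtlety lies precisely in this last step: to reach the \emph{optimal} form one must divide by $2\pi k q+C$ rather than by $2\pi k q$, so that the localization error appears with coefficient exactly $1$ and not $1+O(1/k)$. A secondary check is that K\"ahlerness of $\omega$ on $X\setminus M$ genuinely removes the torsion correction from BKN and produces $2\pi q$ as an honest constant, which is where the closedness of $c_1(L,h^L)$ is essential.
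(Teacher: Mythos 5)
Your argument is correct and is essentially the same as the one in the cited source [LSW, Lemma 3.6] (the paper itself only quotes this lemma without proof): Nakano's inequality on the region where $\omega=c_1(L,h^L)$ is K\"ahler gives $\|\ddbar^{K_X}_k\alpha\|^2+\|\ddbar^{K_X*}_k\alpha\|^2\ge 2\pi kq\|\alpha\|^2$ for $(n,q)$-forms supported there, and the cut-off plus Andreotti--Vesentini density then yields the estimate, with the division by $2\pi kq+C$ producing the coefficient $1$ in front of $\int_{M'}|s|^2$. The only cosmetic point is that $\eta$ should vanish on a neighbourhood of $M$ (not merely on $M$) so that $\supp(\eta s)$ lies in the open set $X\setminus M$ where the metric is K\"ahler; with that adjustment the constants work out exactly as you indicate.
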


\begin{proof}[Proof of Theorem \ref{thm_complete}]
    Let $q\geq 1$.
	From Lemma \ref{lem_complete},
	the optimal fundamental estimate holds in bidegree $(0,q)$ for forms with values in $L^k\otimes K_X$ for $k$ large.  Then Theorem \ref{prop_main} tells us that for any $1\le r\le n$, 
  \begin{equation*}
     \sum_{j=r}^n(-1)^{j-r}\dim_\mathbb{C}H_{(2)}^j(X,L^k\otimes K_X)\leq \frac{k^n}{n!}\int_{M'(\ge r)}{(-1)^r c_1(L,h^L)^n}+o(k^n).
 \end{equation*} 
  Noticed that $M(q)=M'(q)$ for any $q\ge 1$, we complete the proof.
\end{proof}

\bibliographystyle{amsalpha}

\end{document}